\newtheorem{thm}{Theorem}[section]
\newtheorem{corollary}[thm]{Corollary}
\newtheorem{lemma}[thm]{Lemma}
\newtheorem{proposition}[thm]{Proposition}
\theoremstyle{definition}
\theoremstyle{remark}
\DeclareMathOperator{\htt}{ht}
\DeclareMathOperator{\Spec}{Spec}
\DeclareMathOperator{\m}{\frak{m}}
\DeclareMathOperator{\n}{\frak{n}}
\DeclareMathOperator{\edim}{embdim}
\DeclareMathOperator{\cdim}{codim}
\begin{document}

%%%%%%%%%%%%%%%%%%%%%%%%%%%%%%%%%%%%%%%%%%%%%%%%%%%%%%%%%%%%%%%%%%%%%%%%%%%%%%%%%%%%%%%%%%%%
%%%%%%%%%%%%%%%%%%%%%%%%%%%%%%%%%%%%%%%%%%%%%%%%%%%%%%%%%%%%%%%%%%%%%%%%%%%%%%%%%%%%%%%%%%%%
%%%%%%%%%%%%%%%%%%%%%%%%%%%%%%%%%%%%%%%%%%%%%%%%%%%%%%%%%%%%%%%%%%%%%%%%%%%%%%%%%%%%%%%%%%%%
%%%%%%%%%%%%%%%%%%%%%%%%%%%%%%%%%%%%%%%%%%%%%%%%%%%%%%%%%%%%%%%%%%%%%%%%%%%%%%%%%%%%%%%%%%%%
\title[Embedding dimension and codimension of tensor products of $k$-algebras]{Embedding dimension and codimension\\ of tensor products of algebras over a field}

\author[S. Bouchiba]{S. Bouchiba $^{(\star)}$}
\address{Department of Mathematics, University of Meknes, Meknes 50000, Morocco}
\email{bouchiba@fs-umi.ac.ma}

\author[S. Kabbaj]{S. Kabbaj $^{(\star)}$}
\address{Department of Mathematics and Statistics, King Fahd University of Petroleum \& Minerals (KFUPM), Dhahran 31261, KSA}
\email{kabbaj@kfupm.edu.sa}
\thanks{$^{(\star)}$ Supported by KFUPM under DSR Research Grant \# RG1212.}

\date{\today}
%\date{January 24, 2014}

\subjclass[2010]{13H05, 13F20, 13B30, 13E05, 13D05, 14M05, 16E65}

\keywords{Tensor product of $k$-algebras, regular ring, embedding dimension, Krull dimension, embedding codimension, separable extension}

\dedicatory{To David Dobbs on the occasion of his 70th birthday}

%%%%%%%%%%%%%%%%%%%%%%%%%%%%%%%%%%%%%%%%%%%%%%%%%%%%%%%%%%%%%%%%%%%%%%%%%%%%%%%%%%%%%%%%%%%%%%%%%%%%%%%%%%%%%%%%%%%%%%%%%%%%%%%%%%%%%%%%%%%%%%
%%%%%%%%%%%%%%%%%%%%%%%%%%%%%%%%%%%%%%%%%%%%%%%%%%%%%%%%%%%%%%%%%%%%%%%%%%%%%%%%%%%%%%%%%%%%%%%%%%%%%%%%%%%%%%%%%%%%%%%%%%%%%%%%%%%%%%%%%%%%%%
%%%%%%%%%%%%%%%%%%%%%%%%%%%%%%%%%%%%%%%%%%%%%%%%%%%%%%%%%%%%%%%%%%%%%%%%%%%%%%%%%%%%%%%%%%%%%%%%%%%%%%%%%%%%%%%%%%%%%%%%%%%%%%%%%%%%%%%%%%%%%%
%%%%%%%%%%%%%%%%%%%%%%%%%%%%%%%%%%%%%%%%%%%%%%%%%%%%%%%%%%%%%%%%%%%%%%%%%%%%%%%%%%%%%%%%%%%%%%%%%%%%%%%%%%%%%%%%%%%%%%%%%%%%%%%%%%%%%%%%%%%%%%
\begin{abstract}
Let $k$ be a field. This paper investigates the embedding dimension and codimension of Noetherian local rings arising as localizations of tensor products of $k$-algebras. We use results and techniques from prime spectra and dimension theory to establish an analogue of the ``special chain theorem" for the embedding dimension of tensor products, with effective consequence on the transfer or defect of regularity as exhibited by the  (embedding) codimension given by $\cdim(R):=\edim(R)-\dim(R)$.
\end{abstract}
\maketitle

%%%%%%%%%%%%%%%%%%%%%%%%%%%%%%%%%%%%%%%%%%%%%%%%%%%%%%%%%%%%%%%%%%%%%%%%%%%%%%%%%%%%%%%%%%%%
%%%%%%%%%%%%%%%%%%%%%%%%%%%%%%%%%%%%%%%%%%%%%%%%%%%%%%%%%%%%%%%%%%%%%%%%%%%%%%%%%%%%%%%%%%%%
%%%%%%%%%%%%%%%%%%%%%%%%%%%%%%%%%%%%%%%%%%%%%%%%%%%%%%%%%%%%%%%%%%%%%%%%%%%%%%%%%%%%%%%%%%%%
%%%%%%%%%%%%%%%%%%%%%%%%%%%%%%%%%%%%%%%%%%%%%%%%%%%%%%%%%%%%%%%%%%%%%%%%%%%%%%%%%%%%%%%%%%%%
\section{Introduction}\label{i}

\noindent Throughout, all rings are commutative with identity elements, ring homomorphisms are unital, and $k$ stands for a field. The embedding dimension of a Noetherian local ring $(R,\m)$, denoted by $\edim(R)$, is the least number of generators of $\m$ or, equivalently, the dimension of $\m/\m^{2}$ as an $R/\m$-vector space. The ring $R$ is regular if its Krull dimension and embedding dimensions coincide. The (embedding) codimension of $R$ measures the defect of regularity of $R$ and is given by the formula $\cdim(R):=\edim(R)-\dim(R)$. The concept of regularity was initially introduced by Krull and became prominent when Zariski showed that a local regular ring corresponds to a smooth point on an algebraic variety. Later, Serre proved that a ring is regular if and only if it has finite global dimension. This allowed to see that regularity is stable under localization and then the definition got globalized as follows: a Noetherian ring is regular if its localizations with respect to all prime ideals are regular. The ring $R$ is a complete intersection if its $\m$-completion is the quotient ring of a local regular ring modulo an ideal generated by a regular sequence; $R$ is Gorenstein if its injective dimension is finite; and $R$ is Cohen-Macaulay if the grade and height of $\m$ coincide. All these algebro-geometric notions are globalized by carrying over to localizations.

These concepts transfer to tensor products of algebras over a field
under suitable assumptions. It has been proved that a Noetherian
tensor product of algebras (over a field) inherits the notions of
(locally) complete intersection ring, Gorenstein ring, and
Cohen-Macaulay ring \cite{BK2,HTY,TY,WITO}. In particular, a
Noetherian tensor product of any two extension fields is a complete
intersection ring. As to regularity and unlike the above notions, a
Noetherian tensor product of two extension fields of k is not
regular in general. In 1965, Grothendieck proved a positive result
in case one of the two extension fields is a finitely generated
separable extension \cite{Gr}. Recently, we have investigated the
possible transfer of regularity to tensor products of algebras over
a field $k$. If $A$ and $B$ are two $k$-algebras such that $A$ is
geometrically regular; i.e., $A\otimes_{k}F$ is regular for every
finite extension $F$ of $k$ (e.g., $A$ is a separable extension
field over $k$), we proved that  $A\otimes_kB$ is regular if and
only if $B$ is regular and $A\otimes_kB$ is Noetherian \cite[Lemma
2.1]{BK1}. As a consequence, we established necessary and sufficient
conditions for a Noetherian tensor product of two extension fields
of $k$ to inherit regularity under (pure in)separability conditions
\cite[Theorem 2.4]{BK1}. Also, Majadas'
relatively recent paper tackled questions of
regularity and complete intersection of tensor products of
commutative algebras  via the homology theory of Andr\'e and Quillen
\cite{Maj}. Finally, it is worthwhile recalling that tensor products of
rings subject to the above concepts were recently used to broaden or
delimit the context of validity of some homological conjectures; see
for instance \cite{HJ,J}. Suitable background on regular, complete intersection, Gorenstein,
and Cohen-Macaulay rings is \cite{BH,Gr,K,M}. For a geometric
treatment of these properties, we refer the reader to the excellent
book of Eisenbud \cite{E}.

Throughout, given a ring $R$,  $I$ an ideal of $R$ and $p$ a prime
ideal of $R$, when no confusion is likely, we will denote by $I_p$
the ideal $IR_p$ of the local ring $R_p$ and by $\kappa_R(p)$ the
residue field of $R_p$. One of the cornerstones of dimension theory of polynomial rings in several variables is \emph{the special chain theorem}, which essentially asserts that the height of any prime ideal of the polynomial  ring can always be realized via a special chain of prime ideals passing by the extension of its contraction over the basic ring; namely, if $R$ is a Noetherian ring and  $P$ is a prime ideal of $R[X_1,...,X_n]$ with $p:=P\cap R$, then
$$\dim(R[X_1,...,X_n]_P)=\dim(R_p)+\dim\left(\kappa_R(p)[X_1,...,X_n]_{\frac{P_p}{pR_{p}[X_1,...,X_n]}}\right)$$
An analogue of this result for Noetherian tensor products,
established in \cite{BK2}, states that, for any prime ideal $P$ of
$A\otimes_kB$ with $p:=P\cap A$ and $q:=P\cap B$, we have $$
\dim(A\otimes_kB)_P =\dim(A_p)\ +\
\dim\left(\Big(\kappa_A(p)\otimes_kB\Big)_{\frac{P_p}{pA_p\otimes_kB}}\right)
$$
which also comes in the following extended form
$$ \dim(A\otimes_kB)_{P} =
\dim(A_{p})+\dim(B_{q})+\dim\left(\big(\kappa_A(p)\otimes_k\kappa_B(q)\big)_{\frac{P(A_p\otimes_kB_q)}{pA_p\otimes_kB_{q}+A_p\otimes_kqB_{q}}}\right).
$$
%%%%%%%%%%%%%%%%%%%%%%%%%%%%%%%%%%%
This paper investigates the embedding dimension of Noetherian local
rings arising as localizations of tensor products of $k$-algebras.
We use results and techniques from prime spectra and dimension
theory to establish satisfactory analogues of the ``special chain
theorem" for the embedding dimension in various contexts of tensor
products, with effective consequences on the transfer or defect of
regularity as exhibited by the  (embedding) codimension. The paper
traverses four sections along with an introduction.

%%%%SECTION 2%%%%%%%%%%%%%%%%%%%%%%%%
In Section 2, we introduce and study a new invariant which allows to correlate the embedding dimension of a Noetherian local ring $B$ with the fibre ring $B/\m B$ of a local homomorphism $f:A\longrightarrow B$ of Noetherian local rings. This enables us to provide an analogue of the special chain theorem for the embedding dimension as well as to generalize the known result that ``\emph{if $f$ is flat and $A$ and $B/\m B$ are regular rings, then $B$ is regular}."

%%% SECTION 3 %%%%%%%%%%%%%%%%%%%%%%
Section~\ref{p} is devoted to the special case of polynomial rings which will be used in the investigation of tensor products.
The main result (Theorem~\ref{p:1}) states that, for a Noetherian ring $R$ and $X_1,...,X_n$ indeterminates over $R$, for any prime ideal $P$ of
 $R[X_1,...,X_n]$ with $p:=P\cap R$, we have:\\
$$
\begin{array}{lll}
\edim(R[X_1,...,X_n]_P)  &=&\edim(R_p)+\htt\left(\dfrac{P}{p[X_1,...,X_n]}\right)\\
                        &=&\edim(R_p)+\edim\left(\kappa_R(p)[X_1,...,X_n]_{\frac{P_p}{pR_{p}[X_1,...,X_n]}}\right)
\end{array}
$$
Then, Corollary~\ref{p:2} asserts that
$$\cdim(R[X_1,...,X_n]_P)=\cdim(R_p)$$ and recovers a well-known
result on the transfer of regularity to polynomial rings; i.e.,
\emph{$R[X_1,...,X_n]$ is regular if and only if so is $R$} (this
result was initially proved via Serre's result on finite global
dimension and Hilbert Theorem on syzygies). Then Corollary~\ref{p:3}
characterizes regularity in general settings of localizations of
polynomial rings and, in the particular cases of Nagata rings and
Serre conjecture rings, it states that \emph{$R(X_1,...,X_n)$ is
regular if and only if $R\langle X_1,...,X_n\rangle$ is regular if
and only if $R$ is regular}.

%%% SECTION 4 %%%%%%%%%%%%%%%%%%%%%%
Let $A$ and $B$ be two $k$-algebras such that $A\otimes_kB$ is Noetherian and let $P$ be a prime ideal of $A\otimes_kB$ with $p:=P\cap A$ and $q:=P\cap B$. Due to known behavior of tensor products of $k$-algebras subject to
regularity (cf. \cite{BK1,Gr,HTY,TY,WITO}), Section~\ref{s}
investigates the case when $A$ (or $B$) is a separable (not
necessarily algebraic)  extension field of $k$. The main result
(Theorem~\ref{s:1}) asserts that, if $K$ is a separable extension
field of $k$, then
$$\edim(K\otimes_kA)_P   =\edim(A_p)\ +\  \edim\left(\Big(K\otimes_k\kappa_A(p)\Big)_{\frac{P_p}{K\otimes_kpA_p}}\right).$$
In particular, if $K$ is separable algebraic over $k$, then
$$\edim(K\otimes_kA)_{P}  = \edim(A_{p}).$$ Then, Corollary~\ref{s:2}
asserts that $$\cdim(K\otimes_kA)_{P}=\cdim(A_p)$$ and hence
$K\otimes_kA$ is regular if and only if so is $A$. This recovers
Grothendieck's result on the transfer of regularity to tensor
products issued from finite extension fields \cite[Lemma
6.7.4.1]{Gr}.

%%% SECTION 5 %%%%%%%%%%%%%%%%%%%%%%
Section~\ref{r} examines the more general case of tensor products of
$k$-algebras with separable residue fields. The main theorem
(Theorem~\ref{r:1}) states that if $\kappa_B(q)$ is a separable
extension field of $k$, then
$$
\begin{array}{rl}
\edim(A\otimes_kB)_{P}\ =   &\edim(A_{p})\ +\ \edim(B_{q})\\
                            &\hspace{2.1cm}+\ \edim\left(\Big(\kappa_A(p)\otimes_k\kappa_B(q)\Big)_{\frac{P(A_p\otimes_kB_q)}{pA_p\otimes_kB_{q}+A_p\otimes_kqB_{q}}}\right)
\end{array}
$$
Then, Corollary~\ref{r:2} contends that
$$\cdim(A\otimes_kB)_{P}=\cdim(A_{p})+\cdim(B_{q})$$ recovering
known results on the transfer of regularity to tensor products over
perfect fields \cite[Theorem 6(c)]{TY} and, more generally, to
tensor products issued from residually separable extension fields
\cite[Theorem 2.11]{BK1}.

%%%%%%%%%%%%%%%%%%%%%%%%
The four aforementioned main results are connected as follows:
\[\begin{array}{ccccc}
                                &                   &\text{\small Proposition \ref{f:1}}    &           &\\
                                &                   &                                   &\Searrow   &\\
\text{\small Theorem \ref{p:1}} &                   &\Downarrow                         &           &\text{\small Theorem \ref{r:1}}\\
                                &\Searrow           &                                   &\Nearrow   &\\
                                &                   &\text{\small Theorem \ref{s:1}}    &           &
\end{array}\]

%%%%%%%%%%%%%%%%%%%%%%%%%
Of relevance to this study is Bouchiba, Conde-Lago, and Majadas' recent preprint \cite{BCM} where the authors prove some of our results via the homology theory of Andr\'e and Quillen. In the current paper, we offer direct and self-contained proofs using techniques and basic results from commutative ring theory. Early and recent developments on prime spectra and dimension theory are to be found in \cite{BDK,BGK1,BGK2,BK2,S,S2,S3,V,W} for the special case of tensor products of $k$-algebras, and in \cite{ABDFK,BDF,FK,J,K,M,Na}  for the general case. Any unreferenced material is standard, as in \cite{K,M}.

%%%%%%%%%%%%%%%%%%%%%%%%%%%%%%%%%%%%%%%%%%%%%%%%%%%%%%%%%%%%%%%%%%%%%%%%%%%%%%%%%%%%%%%%%%%%
%%%%%%%%%%%%%%%%%%%%%%%%%%%%%%%%%%%%%%%%%%%%%%%%%%%%%%%%%%%%%%%%%%%%%%%%%%%%%%%%%%%%%%%%%%%%
%%%%%%%%%%%%%%%%%%%%%%%%%%%%%%%%%%%%%%%%%%%%%%%%%%%%%%%%%%%%%%%%%%%%%%%%%%%%%%%%%%%%%%%%%%%%
%%%%%%%%%%%%%%%%%%%%%%%%%%%%%%%%%%%%%%%%%%%%%%%%%%%%%%%%%%%%%%%%%%%%%%%%%%%%%%%%%%%%%%%%%%%%
\section{Embedding dimension of Noetherian local rings}

In this section, we discuss the relationship between the embedding dimensions of Noetherian local rings connected by a local ring homomorphism. To this purpose, we introduce a new invariant $\mu$ which allows to relate the embedding dimension of a local ring to that of its fibre ring.

Throughout, let $(A,\m,K)$ and $(B,\n,L)$ be local Noetherian rings, $f:A\longrightarrow B$ a local homomorphism (i.e., $\m B:=f(\m)B\subseteq\n$), and $I$ a proper ideal  of $A$. Let
$$\mu_A(I):=\dim_K\left(\dfrac{I+\m^2}{\m^2}\right).$$
Note that $\mu_A(I)$ equals the maximal number of elements of $I$ which are part of a minimal basis of $\m$; so that $0\leq\mu_A(I)\leq\edim(A)$ and $\mu_A(\m)=\edim(A)$. Next, let $\mu_B^f(I)$ denote the maximal number of elements of $IB:=f(I)B$ which are part of a minimal basis of $\n$; that is,
$$\mu_B^f(I):=\mu_B(IB)=\dim_L\left(\dfrac {IB+\n^2}{\n^2}\right).$$
It is easily seen that if $x_1,\dots,x_r$ are elements of $\m$ such that $f(x_1),\dots,f(x_r)$ are part of a minimal basis of $\n$, then $x_1,\dots,x_r$ are part of a minimal basis of $\m$ as well. That is, $0\leq \mu_B^f(I)\leq \mu_A(I)$. Moreover, if $J$ is a proper ideal of $B$ and $\pi:B\twoheadrightarrow B/J$ is the canonical surjection, then the natural linear map  of $L$-vector spaces
$\dfrac{IB+\n^2}{\n^2}\twoheadrightarrow \dfrac{IB+\n^2+J}{\n^2+J}$
yields $\mu_{B/J}^{\pi\circ f}(I)\leq \mu_B^f(I)$.

%%%%%%%%%%%%%%%%%%%%%%%%%%%%%%%%%%%%%%%%%%%%%%%%%%%%%%%%%%%%%%%%%%
%%%%%%%%%%%%%%%%%%%%%%%%%%%%%%%%%%%%%%%%%%%%%%%%%%%%%%%%%%%%%%%%%%
\begin{proposition}\label{n:1}
Under the above notation, we have:
$$\edim(B)=\mu_B^f(I)+\edim(B/IB).$$
In particular, $$\edim(A)=\mu_A(I)+\edim(A/I).$$
\end{proposition}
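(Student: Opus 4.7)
The plan is to derive the identity from a short exact sequence of $L$-vector spaces obtained by filtering $\mathfrak{n}/\mathfrak{n}^{2}$ through the image of $IB$. Everything in sight is finite-dimensional over $L = B/\mathfrak{n}$, so the identity will follow from additivity of dimensions.

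First I would unwind the three quantities in terms of $L$-vector spaces. By definition,
$$\edim(B)=\dim_{L}\bigl(\mathfrak{n}/\mathfrak{n}^{2}\bigr),\qquad \mu_{B}^{f}(I)=\dim_{L}\bigl((IB+\mathfrak{n}^{2})/\mathfrak{n}^{2}\bigr).$$
For the last term, observe that $B/IB$ is a local Noetherian ring with maximal ideal $\mathfrak{n}/IB$ and residue field still equal to $L$ (because $IB\subseteq \mathfrak{n}$ since $f$ is local), and that $(\mathfrak{n}/IB)^{2}=(\mathfrak{n}^{2}+IB)/IB$. Therefore
$$\edim(B/IB)=\dim_{L}\!\left(\frac{\mathfrak{n}/IB}{(\mathfrak{n}/IB)^{2}}\right)=\dim_{L}\!\left(\frac{\mathfrak{n}}{\mathfrak{n}^{2}+IB}\right),$$
by a routine third-isomorphism-theorem identification.

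Next I would write down the short exact sequence of $L$-vector spaces
$$0\longrightarrow \frac{IB+\mathfrak{n}^{2}}{\mathfrak{n}^{2}}\longrightarrow \frac{\mathfrak{n}}{\mathfrak{n}^{2}}\longrightarrow \frac{\mathfrak{n}}{IB+\mathfrak{n}^{2}}\longrightarrow 0,$$
whose maps are the obvious inclusion and projection (both of which are $B/\mathfrak{n}$-linear since $IB\subseteq\mathfrak{n}$). Additivity of dimensions in this sequence yields exactly
$$\edim(B)=\mu_{B}^{f}(I)+\edim(B/IB),$$
which is the desired formula. The "in particular" assertion is then an immediate specialization to $A=B$ with $f=\mathrm{id}_{A}$, for which $IB=I$ and $\mu_{B}^{f}(I)=\mu_{A}(I)$.

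I do not anticipate a real obstacle: the only point requiring a brief sanity check is that $B/IB$ is local with the claimed maximal ideal and residue field, and that $(\mathfrak{n}/IB)^{2}=(\mathfrak{n}^{2}+IB)/IB$. Both are standard, and after that the argument is a one-line vector-space dimension count.
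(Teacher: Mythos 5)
Your proof is correct and is essentially the paper's own argument: the same short exact sequence $0\to (IB+\n^2)/\n^2\to \n/\n^2\to \n/(IB+\n^2)\to 0$ with the identification $\n/(IB+\n^2)\cong (\n/IB)/(\n/IB)^2$, and the same specialization $f=\mathrm{id}_A$ for the second statement. You merely spell out the routine verifications (locality of $B/IB$, residue field, third isomorphism theorem) that the paper leaves implicit.
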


%%%%%%%%%%%%%%%%%%%%%%%%%%%
%%%%%%%%%%%%%%%%%%%%%%%%%%%
\begin{proof} The first statement follows easily from the following exact sequence of
$L$-vector spaces
$$0\longrightarrow \dfrac{IB+n^2}{n^2}\longrightarrow \dfrac n{n^2}\longrightarrow \dfrac{n}{IB+n^2}=\dfrac{n/IB}{(n/IB)^2}\longrightarrow 0.$$
The second statement holds since $\mu_A(I)=\mu_A^{\text{id}_A}(I)$.
\end{proof}

Recall that, under the above notation, the following inequality always holds: $\dim(B)\leq\dim(A)+\dim(B/\m B)$. The first corollary provides an analogue
for the embedding dimension.

%%%%%%%%%%%%%%%%%%%%%%%%%%%%%%%%%%%%%%%%%%%%%%%%%%%%%%%%%%%%%%%%%%
%%%%%%%%%%%%%%%%%%%%%%%%%%%%%%%%%%%%%%%%%%%%%%%%%%%%%%%%%%%%%%%%%%
\begin{corollary}\label{n:1.1}
Under the above notation, we have:
$$\edim(B)\leq\edim(A)-\edim(A/I)+\edim(B/IB).$$
In particular,
$$\edim(B)\leq\edim(A)+\edim(B/\m B).$$
\end{corollary}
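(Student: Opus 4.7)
The plan is to combine the two identities of Proposition~\ref{n:1} with the inequality $\mu_B^f(I)\leq \mu_A(I)$ that was already established in the paragraph preceding that proposition (the observation that if $f(x_1),\dots,f(x_r)$ are part of a minimal basis of $\n$, then the $x_i$ are part of a minimal basis of $\m$). The proof will therefore be a short chain of (in)equalities, with no new construction required.

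First, I would apply Proposition~\ref{n:1} to rewrite $\edim(B)=\mu_B^f(I)+\edim(B/IB)$. Then I would use the inequality $\mu_B^f(I)\leq \mu_A(I)$ to obtain
$$\edim(B)\ \leq\ \mu_A(I)+\edim(B/IB).$$
Finally, I would invoke the second identity of Proposition~\ref{n:1}, namely $\mu_A(I)=\edim(A)-\edim(A/I)$, to substitute for $\mu_A(I)$ and arrive at the desired inequality
$$\edim(B)\ \leq\ \edim(A)-\edim(A/I)+\edim(B/IB).$$

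For the ``In particular'' statement, I would specialize the ideal to $I=\m$. Since $A/\m=K$ is a field, its unique maximal ideal is zero, so $\edim(A/\m)=0$, and the inequality collapses to $\edim(B)\leq \edim(A)+\edim(B/\m B)$, as required.

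There is really no main obstacle here: the genuine content lies in Proposition~\ref{n:1} and in the elementary linear-algebra observation $\mu_B^f(I)\leq \mu_A(I)$, both of which are available upstream. The only subtle point worth noting explicitly in the write-up is the justification for $\edim(A/\m)=0$ when passing to the particular case, to make clear why the $-\edim(A/I)$ term disappears.
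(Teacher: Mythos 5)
Your proof is correct and takes exactly the route the paper intends: the paper states this corollary without a written proof precisely because it is the immediate combination of Proposition~\ref{n:1} (applied twice, once to $f$ and once to $\mathrm{id}_A$) with the inequality $\mu_B^f(I)\leq\mu_A(I)$ established in the paragraph preceding that proposition. Your explicit justification that $\edim(A/\m)=0$ because $A/\m=K$ is a field is the right way to obtain the ``in particular'' statement from the case $I=\m$.
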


It is well known that if $f$ is flat and both $A$ and $B/m\ B$ are regular, then $B$ is regular. The second corollary generalizes this result to homomorphisms
subject to going-down. Recall that a ring homomorphism $h:R\longrightarrow S$ satisfies going-down (henceforth abbreviated GD) if for any pair $p\subseteq q$ in $\Spec(R)$ such that there exists $Q\in\Spec(S)$ lying over $q$, then there exists $P\in\Spec(S)$ lying over $p$ with $P\subseteq Q$. Any flat ring homomorphism satisfies GD.

%%%%%%%%%%%%%%%%%%%%%%%%%%%%%%%%%%%%%%%%%%%%%%%%%%%%%%%%%%%%%%%%%%
%%%%%%%%%%%%%%%%%%%%%%%%%%%%%%%%%%%%%%%%%%%%%%%%%%%%%%%%%%%%%%%%%%
\begin{corollary}
 Under the above notation, assume that $f$ satisfies GD. Then:
\begin{enumerate}
\item $\cdim(B)=\left(\mu_B^f(\m)-\dim(A)\right)+\cdim(B/\m B)$.

\item $\cdim(B)+\left(\edim(A)-\mu_B^f(\m)\right)=\cdim(A)+\cdim(B/\m B)$.

\item $B$ is regular and $\mu_B^f(\m)=\edim(A)$ $\Longleftrightarrow$ $A$ and
$B/\m B$ are regular.
\end{enumerate}
\end{corollary}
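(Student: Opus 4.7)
The plan is to combine Proposition \ref{n:1} (applied with $I=\m$) with the dimension formula $\dim(B)=\dim(A)+\dim(B/\m B)$ that holds under GD, and then to derive (c) from (b) by a nonnegativity argument.

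First I would record the key dimensional input. The text notes that $\dim(B)\leq\dim(A)+\dim(B/\m B)$ is automatic. For the reverse inequality, assuming GD, I would take a chain in $B/\m B$ of length $s=\dim(B/\m B)$ ending at $\n/\m B$, lift it to a chain $\q_0\subsetneq\cdots\subsetneq\q_s=\n$ in $B$ all of whose members contain $\m B$; note that $\q_0\cap A=\m$ by maximality of $\m$. Then, starting from any chain $\p_0\subsetneq\cdots\subsetneq\p_r=\m$ in $A$ of length $r=\dim(A)$, repeated use of GD produces a chain in $B$ going down from $\q_0$ that contracts back to the $\p_i$'s, yielding a chain of total length $r+s$ in $B$. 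Hence $\dim(B)\geq\dim(A)+\dim(B/\m B)$, so equality holds.

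For (a), Proposition \ref{n:1} with $I=\m$ gives $\edim(B)=\mu_B^f(\m)+\edim(B/\m B)$. Subtracting $\dim(B)=\dim(A)+\dim(B/\m B)$ from this immediately yields
\[
\cdim(B)=\bigl(\mu_B^f(\m)-\dim(A)\bigr)+\cdim(B/\m B).
\]
For (b), I add $\edim(A)-\mu_B^f(\m)$ to both sides of (a); the right-hand side becomes $\edim(A)-\dim(A)+\cdim(B/\m B)=\cdim(A)+\cdim(B/\m B)$.

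For (c), the point is that each summand in
\[
\cdim(B)+\bigl(\edim(A)-\mu_B^f(\m)\bigr)=\cdim(A)+\cdim(B/\m B)
\]
is nonnegative: the codimensions are always $\geq 0$, while $\edim(A)-\mu_B^f(\m)\geq 0$ because $\mu_B^f(\m)\leq\mu_A(\m)=\edim(A)$ as recorded just before Proposition \ref{n:1}. Therefore the left side vanishes iff $\cdim(B)=0$ and $\mu_B^f(\m)=\edim(A)$, while the right side vanishes iff $A$ and $B/\m B$ are both regular, giving the equivalence. The only nontrivial ingredient is the GD-based dimension equality; everything else is an exact bookkeeping on the identity in Proposition \ref{n:1}.
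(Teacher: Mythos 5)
Your proof is correct and takes essentially the same route as the paper, whose proof is simply the combination of Proposition~\ref{n:1} (with $I=\m$) and the dimension equality $\dim(B)=\dim(A)+\dim(B/\m B)$ under GD, for which it cites \cite[Theorem 15.1]{M}. The only difference is that you re-derive that cited equality by the standard chain-lifting argument instead of quoting it; your bookkeeping for (a)--(b) and the nonnegativity argument for (c), resting on $0\leq\mu_B^f(\m)\leq\mu_A(\m)=\edim(A)$ as recorded before Proposition~\ref{n:1}, is exactly the intended ``straightforward combination.''
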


%%%%%%%%%%%%%%%%%%%%%%%%%%%
%%%%%%%%%%%%%%%%%%%%%%%%%%%
\begin{proof}
The proof is straightforward via a combination of Proposition~\ref{n:1} and \cite[Theorem 15.1]{M}.
\end{proof}

%%%%%%%%%%%%%%%%%%%%%%%%%%%%%%%%%%%%%%%%%%%%%%%%%%%%%%%%%%%%%%%%%%
%%%%%%%%%%%%%%%%%%%%%%%%%%%%%%%%%%%%%%%%%%%%%%%%%%%%%%%%%%%%%%%%%%
\begin{corollary}
 Under the above notation, assume that $f$ satisfies GD. Then:
\begin{enumerate}
\item $\cdim(B)\leq \cdim(A)+\cdim(B/\m B)$.
\item If $B/\m B$ is regular, then $\cdim(B)\leq\cdim(A)$.
\end{enumerate}
\end{corollary}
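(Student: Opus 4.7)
The plan is to derive both inequalities as immediate algebraic consequences of the preceding corollary, exploiting the basic bound $\mu_B^f(I)\leq \mu_A(I)$ that was established right before Proposition~\ref{n:1}. Since no new ring-theoretic input (beyond GD, which has already been absorbed in the preceding corollary) is needed, the whole argument reduces to manipulating the two identities
$$\cdim(B)=\bigl(\mu_B^f(\m)-\dim(A)\bigr)+\cdim(B/\m B)$$
and
$$\cdim(B)+\bigl(\edim(A)-\mu_B^f(\m)\bigr)=\cdim(A)+\cdim(B/\m B),$$
together with the key observation that $\mu_B^f(\m)\leq \mu_A(\m)=\edim(A)$.

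For (a), I would rewrite the second identity as
$$\cdim(B)=\cdim(A)+\cdim(B/\m B)-\bigl(\edim(A)-\mu_B^f(\m)\bigr),$$
and then note that the subtracted term $\edim(A)-\mu_B^f(\m)$ is nonnegative by the aforementioned bound; the desired inequality is immediate.

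For (b), under the assumption that $B/\m B$ is regular we have $\cdim(B/\m B)=0$. One can conclude in two equivalent ways: either invoke (a) directly to get $\cdim(B)\leq \cdim(A)+0=\cdim(A)$, or use the first identity of the preceding corollary, which now reads $\cdim(B)=\mu_B^f(\m)-\dim(A)$, and bound $\mu_B^f(\m)\leq \edim(A)$ to obtain $\cdim(B)\leq \edim(A)-\dim(A)=\cdim(A)$.

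There is essentially no obstacle here: once the identities of the preceding corollary are in hand and the inequality $\mu_B^f(\m)\leq \edim(A)$ is recalled, both statements are one-line consequences. The only thing to be careful about is to point explicitly to the bound $\mu_B^f(\m)\leq \mu_A(\m)=\edim(A)$ (noted in the discussion preceding Proposition~\ref{n:1}) so that the reader sees why the nonnegative ``slack term'' $\edim(A)-\mu_B^f(\m)$ indeed has a sign.
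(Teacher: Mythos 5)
Your proposal is correct and is essentially the paper's own argument in a lightly repackaged form: the paper proves both items by combining Corollary~\ref{n:1.1} with the GD equality $\dim(B)=\dim(A)+\dim(B/\m B)$, which is precisely the content of the two identities you manipulate, and in both versions the inequality comes down to the same nonnegative slack term $\edim(A)-\mu_B^f(\m)\geq 0$ furnished by the bound $\mu_B^f(\m)\leq\mu_A(\m)=\edim(A)$ noted before Proposition~\ref{n:1}. Nothing is missing.
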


%%%%%%%%%%%%%%%%%%%%%%%%%%%
%%%%%%%%%%%%%%%%%%%%%%%%%%%
\begin{proof}
The proof is direct via a combination of Corollary~\ref{n:1.1} and the known fact that $\dim(B)=\dim(A)+\dim(B/\m B)$.
\end{proof}

%%%%%%%%%%%%%%%%%%%%%%%%%%%%%%%%%%%%%%%%%%%%%%%%%%%%%%%%%%%%%%%%%%%%%%%%%%%%%%%%%%%%%%%%%%%%
%%%%%%%%%%%%%%%%%%%%%%%%%%%%%%%%%%%%%%%%%%%%%%%%%%%%%%%%%%%%%%%%%%%%%%%%%%%%%%%%%%%%%%%%%%%%
%%%%%%%%%%%%%%%%%%%%%%%%%%%%%%%%%%%%%%%%%%%%%%%%%%%%%%%%%%%%%%%%%%%%%%%%%%%%%%%%%%%%%%%%%%%%
%%%%%%%%%%%%%%%%%%%%%%%%%%%%%%%%%%%%%%%%%%%%%%%%%%%%%%%%%%%%%%%%%%%%%%%%%%%%%%%%%%%%%%%%%%%%
\section{Embedding dimension and codimension of polynomial rings}\label{p}

\noindent This section is devoted to the special case of polynomial
rings which will be used, later, for the investigation of tensor
products. The main result of this section (Theorem~\ref{p:1})
settles a formula for the embedding dimension for the localizations
of polynomial rings over  Noetherian rings. It recovers (via
Corollary~\ref{p:2}) a well-known result on the transfer of
regularity to polynomial rings; that is, $R[X_1,....,X_n]$ is
regular if and only if so is $R$. Moreover, Theorem~\ref{p:1} leads
to investigate the regularity of two famous localizations of
polynomial rings in several variables; namely, the Nagata ring
$R(X_1,X_2,...,X_n)$ and Serre conjecture ring $R\langle
X_1,X_2,...,X_n\rangle$. We show that the regularity of these two
constructions is entirely characterized by the regularity of $R$
(Corollary~\ref{p:3}).

Recall that one of the cornerstones of dimension theory of
polynomial rings in several variables is \emph{the special chain
theorem}, which essentially asserts that the height of any prime
ideal $P$ of $R[X_1,...,X_n]$ can always be realized via a special
chain of prime ideals passing by the extension $(P\cap
R)[X_1,...,X_n]$. This result was first proved by Jaffard in
\cite{J} and, later, Brewer, Heinzer, Montgomery and Rutter
reformulated it in the following simple way (\cite[Theorem
1]{BHMR}): Let $P$ be a prime ideal of $R[X_1,...,X_n]$ with
$p:=P\cap R$. Then
$\htt(P)=\htt(p[X_1,...,X_n])+\htt\left(\dfrac{P}{p[X_1,...,X_n]}\right).$
In a Noetherian setting, this formula becomes:
\begin{equation}\label{sct}
\begin{array}{lll}
\dim(R[X_1,...,X_n]_P)  &=&\dim(R_p)+\htt\left(\dfrac{P}{p[X_1,...,X_n]}\right)\\
                        &=&\dim(R_p)+\dim\left(\kappa_R(p)[X_1,...,X_n]_{\frac{P_p}{pR_{p}[X_1,...,X_n]}}\right)
\end{array}
\end{equation}
where the second equality holds on account of the basic fact
$\frac{P}{p[X_1,...,X_n]}\cap\frac{R}{p}=0$. The main result of this
section (Theorem~\ref{p:1}) features a ``special chain theorem" for
the embedding dimension with effective consequence on the
codimension.\\

%%%%%%%%%%%%%%%%%%%%%%%%%%%%%%%%%%%%%%%%%%%%%%%%%%%%%%%%%%%%%%%%%%
%%%%%%%%%%%%%%%%%%%%%%%%%%%%%%%%%%%%%%%%%%%%%%%%%%%%%%%%%%%%%%%%%%
\begin{thm}\label{p:1}
Let $R$ be a Noetherian ring and $X_1,...,X_n$ be indeterminates
over $R$. Let $P$ be a prime ideal of $R[X_1,...,X_n]$ with
$p:=P\cap R$. Then:
$$
\begin{array}{lll}
\edim(R[X_1,...,X_n]_P)  &=&\edim(R_p)+\htt\left(\dfrac{P}{p[X_1,...,X_n]}\right)\\
                        &=&\edim(R_p)+\edim\left(\kappa_R(p)[X_1,...,X_n]_{\frac{P_p}{pR_{p}[X_1,...,X_n]}}\right)
\end{array}
$$
\end{thm}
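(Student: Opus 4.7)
The plan is to reduce to a local base ring and then apply Proposition~\ref{n:1}, the embedding-dimension analogue of the special chain theorem. Since $R[X_1,\ldots,X_n]_P$ coincides with $R_p[X_1,\ldots,X_n]_{P_p}$ and none of the invariants in the statement is affected by replacing $R$ by $R_p$, I would assume without loss of generality that $R$ is local with maximal ideal $p=\m$. Set $B:=R[X_1,\ldots,X_n]_P$, $\n:=PB$, $L:=B/\n$, and let $f:R\to B$ be the canonical local map. Applying Proposition~\ref{n:1} to $f$ with $I=\m$ yields
\[
\edim(B)=\mu_B^f(\m)+\edim(B/\m B),
\]
while the fibre $B/\m B\cong \kappa_R(p)[X_1,\ldots,X_n]_{\bar P}$ (with $\bar P:=P/\m[X_1,\ldots,X_n]$) is a regular local ring as a localization of a polynomial ring over a field, so $\edim(B/\m B)=\dim(B/\m B)=\htt(\bar P)=\htt(P/p[X_1,\ldots,X_n])$. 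This already settles the equivalence of the two right-hand sides of the theorem and reduces everything to showing $\mu_B^f(\m)=\edim(R_p)=:r$.

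Fix a minimal basis $x_1,\ldots,x_r$ of $\m$ and lift a minimal basis $\bar f_1,\ldots,\bar f_s$ of the maximal ideal of $B/\m B$ (a regular system of parameters in this regular local ring) to elements $f_1,\ldots,f_s\in P$; set $J:=(f_1,\ldots,f_s)B$, so that $\n=\m B+J$ by Nakayama. The key intermediate claim is the identity
\[
\m B\cap\n^2=\m B\cdot\n,
\]
which via the expansion $\n^2=(\m B)^2+\m B\cdot J+J^2$ reduces to the inclusion $\m B\cap J^2\subseteq \m B\cdot J$.

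To prove this inclusion, take $z=\sum_k f_kg_k\in\m B$ with $g_k\in J$; reducing modulo $\m B$ yields $\sum_k\bar f_k\bar g_k=0$ in $B/\m B$. Since $\bar f_1,\ldots,\bar f_s$ is a regular sequence in $B/\m B$, exactness of the Koszul complex provides $\bar g_k=\sum_l\bar b_{kl}\bar f_l$ with $\bar b_{kl}=-\bar b_{lk}$. Choosing antisymmetric lifts $b_{kl}\in B$ and writing $g_k=u_k+\sum_l b_{kl}f_l$ with $u_k\in\m B$, one computes
\[
z=\sum_k f_ku_k+\sum_{k,l}b_{kl}f_kf_l=\sum_k f_ku_k\in\m B\cdot J,
\]
since the second sum vanishes by antisymmetry. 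Granting the identity, the flatness of $R\to R[X_1,\ldots,X_n]$ (hence of $R\to B$) gives
\[
\frac{\m B+\n^2}{\n^2}=\frac{\m B}{\m B\cap\n^2}=\frac{\m B}{\m B\cdot\n}\cong\m\otimes_R L\cong(\m/\m^2)\otimes_K L,
\]
a vector space of dimension $r$ over $L$; consequently $\mu_B^f(\m)=r$ and the theorem follows.

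The main obstacle is the identity $\m B\cap J^2\subseteq\m B\cdot J$: it forces one to marshal the exactness of the Koszul complex on the regular sequence $\bar f_1,\ldots,\bar f_s$ in the generic fibre $B/\m B$, together with the antisymmetric lifting argument above, and it is precisely at this step that the regularity of $B/\m B$ plays the decisive role.
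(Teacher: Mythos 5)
Your proposal is correct, and it takes a genuinely different route from the paper's proof of Theorem~\ref{p:1}. The paper argues by double induction: on the number of variables $n$ (with the catenarity of $\kappa_R(p)[X_1,\dots,X_n]$ used to splice heights when $n\geq 2$) and, for $n=1$, on $r=\edim(R_p)$, split into the two cases where $P=pR[X]$ or $P$ is an upper to $p$; the inductive step rests on Kaplansky's Theorem 159 applied to a suitable $a\in p$ with $\frac{a}{1}\in P_P\setminus P_P^2$ (whose existence in the upper case requires a Krull-intersection argument), and the principal ideal theorem settles uppers to zero. You instead localize at $p$, invoke the paper's own Proposition~\ref{n:1} together with the regularity of the fibre $B/\m B\cong\kappa_R(p)[X_1,\dots,X_n]_{\overline{P}}$, and reduce everything to the single identity $\m B\cap\n^2=\m B\cdot\n$, which you prove by lifting Koszul syzygies of the regular system of parameters $\overline{f_1},\dots,\overline{f_s}$ antisymmetrically; flatness of $R\to B$ then gives $\m B/\m B\n\cong(\m/\m^2)\otimes_K L$, whence $\mu_B^f(\m)=\edim(R_p)$, and Proposition~\ref{n:1} finishes. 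I checked the key steps: the reduction of $\m B\cap\n^2=\m B\cdot\n$ to $\m B\cap J^2\subseteq\m B\cdot J$ via $\n^2=(\m B)^2+\m B\cdot J+J^2$ is sound, the antisymmetric-lift computation does kill the quadratic term, and the regular system of parameters is indeed a regular sequence so $H_1$ of the Koszul complex vanishes. Notably, your argument is non-inductive and establishes a strictly more general lemma: for any flat local homomorphism of Noetherian local rings whose closed fibre is regular, $\mu_B^f(\m)=\edim(A)$, i.e., $\edim(B)=\edim(A)+\edim(B/\m B)$; this would also shortcut parts of Sections~\ref{s} and~\ref{r}, where the relevant fibres are regular by separability (compare Lemma~\ref{s:1.1} and Step 1 of the proof of Theorem~\ref{s:1}). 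What the paper's route buys in exchange is that it stays entirely within elementary Kaplansky-style commutative ring theory, with no Koszul homology, in keeping with its stated aim of direct and self-contained proofs. Three cosmetic points: your lifts $f_i$ naturally live in $\n=PB$ rather than in $P$ itself (harmless after clearing unit denominators); $\n=\m B+J$ follows from the ideal correspondence, no Nakayama needed; and it is worth stating explicitly that the elements $u_k$ satisfy $f_k u_k\in J\cdot\m B$ because $f_k\in J$ and $u_k\in\m B$.
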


%%%%%%%%%%%%%%%%%%%%%%%%%%%
%%%%%%%%%%%%%%%%%%%%%%%%%%%
\begin{proof} We use induction on $n$. Assume $n=1$ and let $P$ be a prime ideal of $R[X]$ with $p:=P\cap R$ and $r:=\edim(R_p)$. Then $p_{p}=(a_{1}, ..., a_{r})R_p$ for some $a_{1}, ..., a_{r}\in p$. We envisage two cases; namely, either $P$ is an extension of $p$ or an upper to $p$. For both cases, we will use induction on $r$.

{\bf Case 1:} $P$ is an extension of $p$ (i.e., $P=pR[X]$). We prove
that  $\edim(R[X]_P)$ $=r$. Indeed, we have $P_{P} =
pR_p[X]_{pR_p[X]}=(a_{1}, ..., a_{r})R_p[X]_{pR_p[X]}=(a_{1}, ...,
a_{r})R[X]_P.$ So, obviously, if $p_p=(0)$, then $P_{P}=0$. Next, we
may assume $r\geq 1$. One can easily check that the canonical ring
homomorphism $\varphi:R_p\longrightarrow R[X]_P$ is injective with
$\varphi(p_p)\subseteq P_P$. This forces $\edim(R[X]_P)\geq 1$.
Hence, there exists $j\in \{1,...,n\}$, say $j=1$, such that
$a:=a_{1}\in p$ with $\frac{a}{1}\in P_P\setminus P^2_P$ and, a
fortiori, $\frac{a}{1}\in p_p\setminus p^2_p$. By \cite[Theorem
159]{K}, we get
\begin{equation}\label{Kap159}
\left\{
\begin{array}{lll}
\edim(R[X]_P)=1+\edim\left(\dfrac R{(a)}[X]_{\frac P{aR[X]}}\right)\\
\edim(R_p)=1+ \edim\left((\dfrac R{(a)}\Big )_{\frac p{(a)}}\right)
\end{array}\right.
\end{equation}
Therefore $\edim\left(\Big(\dfrac R{(a)}\Big)_{\frac
p{(a)}}\right)=r-1$ and then, by induction on $r$, we obtain
\begin{equation}\label{ind1}\edim\left(\frac R{(a)}[X]_{\frac P{aR[X]}}\right)=\edim\left(\Big(\frac R{(a)}\Big)_{\frac p{(a)}}\right).\end{equation}
A combination of (\ref{Kap159}) and (\ref{ind1}) leads to
$\edim(R[X]_P) = r$, as desired.

{\bf Case 2:} $P$ is an upper to $p$ (i.e., $P\not=pR[X]$). We prove
that  $\edim(R[X]_P) = r+1$. Note that $PR_p[X]$ is also an upper to
$p_p$ and then there exists a (monic) polynomial $f\in R[X]$ such
that $\overline{\frac{f}{1}}$ is irreducible in $\kappa_R(p)[X]$ and
$PR_p[X]=pR_p[X]+fR_p[X]$. Notice that $pR[X]+fR[X]\subseteq P$ and
we have
$$\begin{array}{rcl}
P_P &=  &PR_p[X]_{PR_p[X]}=(pR_p[X]+fR_p[X])_{PR_p[X]}\\
    &=  &(p[X]+fR[X])R_p[X]_{PR_p[X]}=(p[X]+fR[X])_{P}\\
    &=  &p[X]_P+fR[X]_P=(a_{1}, ..., a_{r}, f)R[X]_P.
\end{array}
$$
Assume $r=0$. Then $P$ is an upper to zero with  $P_{P}=fR[X]_{P}$.
So that $\edim(R[X]_P)\leq 1$. Further, by the principal ideal theorem
\cite[Theorem 152]{K}, we have
$$\edim(R[X]_P) \geq \dim(R[X]_P)=\htt(P)=1.$$
It follows that  $\edim(R[X]_P)= 1$, as desired.

Next, assume $r\geq 1$. We claim that $pR[X]_{P}\nsubseteqq
P^{2}_{P}$. Deny and suppose that $pR[X]_{P}\subseteq P^{2}_{P}$.
This assumption combined with the fact $P_P=p[X]_P+fR[X]_P$ yields
$\dfrac{P_P}{P^2_P}=\overline{f}R[X]_P$ as $R[X]_P$-modules and
hence $P_P=fR[X]_P$ by \cite[Theorem 158]{K}. Next, let $a\in p$.
Then, as $\frac a1\in P_P=fR[X]_P$, there exist $g\in R[X]$ and $s,
t\in R[X]\setminus P$ such that $t(sa-fg)=0$. So that $tfg\in p[X]$,
whence $tg\in p[X]\subset P$ as $f\notin p[X]$. It follows that $tsa
= tfg\in P^{2}$ and thus $\frac a1\in P_P^2=f^2R[X]_P$. We iterate
the same process to get $\frac a1\in P_P^n=f^nR[X]_P$ for each
integer $n\geq 1$. Since $R[X]_P$ is a Noetherian local ring,
$\bigcap P^n_P=(0)$ and thus ${\frac a1}=0$  in $R[X]_P$. By the
canonical injective homomorphism $R_p\hookrightarrow R[X]_P$,
$\frac{a}{1}=0$ in $R_p$. Thus $p_p=(0)$, the desired contradiction.

Consequently, $pR[X]_{P}=(a_{1}, ..., a_{r})R[X]_P\nsubseteqq
P^{2}_{P}.$ So, there exists $j\in \{1,...,n\}$, say $j=1$, such
that $a:=a_{1}\in P_P\setminus P^2_P$ and, a fortiori, $a\in
p_p\setminus p^2_p$. Similar arguments as in Case 1 lead to the same
two formulas displayed in (\ref{Kap159}). Therefore
$\edim\left(\Big(\dfrac R{(a)}\Big)_{\frac p{(a)}}\right)=r-1$ and
then, by induction on $r$, we obtain
\begin{equation}\label{ind2}\edim\left(\frac{R}{(a)}[X]_{\frac{P}{aR[X]}}\right)=1+\edim\left(\Big(\frac{R}{(a)}\Big)_{\frac{p}{(a)}}\right).\end{equation}
A combination of (\ref{Kap159}) and (\ref{ind2}) leads to
$\edim(R[X]_P) = r + 1$, as desired.

Now, assume that $n\geq 2$ and set $R[k]:=R[X_1, ...,X_k]$ and
$p[k]=p[X_1, ...,X_k]$ for $k:=1,...,n$. Let $P^{\prime}:=P\cap
R[n-1]$. We prove that $\edim(R[n]_P) = r+\htt\left(\dfrac
P{p[n]}\right).$ Indeed, by virtue of the case $n=1$, we have
\begin{equation}\label{n=1}\edim(R[n]_P)=\edim(R[n-1]_{P^{\prime}})+\htt\left(\frac P{P^{\prime}[X_n]}\right).\end{equation}
Moreover, by induction hypothesis, we get
\begin{equation}\label{ind3}\edim(R[n-1]_{P^{\prime}})=r+\htt\left(\frac {P^{\prime}}{p[n-1]}\right).\end{equation}
Note that the prime ideals $\dfrac{P^{\prime}[X_{n}]}{p[n]}$ and
${\dfrac {P}{p[n]}}$ both survive in $\kappa_R(p)[n]$, respectively.
Hence, as  $\kappa_R(p)[n]$ is catenarian and $(R/p)[n-1]$ is
Noetherian, we obtain
\begin{equation}\label{cat}
\begin{array}{rcl}
\htt\left(\dfrac P{p[n]}\right)=
\htt\left(\dfrac{P^{\prime}[X_{n}]}{p[n]}\right)+\htt\left(\dfrac{P}{P^{\prime}[X_n]}\right)
=\htt\left(\dfrac{P^{\prime}}{p[n-1]}\right) +
\htt\left(\dfrac{P}{P^{\prime}[X_n]}\right).
\end{array}
\end{equation}
Further, the fact that $\kappa_R(p)[X_1,...,X_n]$ is regular  yield
\begin{equation}\label{reg1}
\htt\left(\dfrac{P}{p[X_1,...,X_n]}\right)=
\edim\left(\kappa_R(p)[X_1,...,X_n]_{\frac{P_p}{pR_{p}[X_1,...,X_n]}}\right).
\end{equation}
So (\ref{n=1}), (\ref{ind3}), (\ref{cat}), and (\ref{reg1}) lead to
the conclusion, completing the proof of the theorem.
\end{proof}

As a first application of Theorem~\ref{p:1}, we get the next
corollary on the (embedding) codimension. In particular, it recovers
a well-known result on the transfer of regularity to polynomial
rings (initially proved via Serre's result on finite global
dimension and Hilbert Theorem on syzygies \cite[Theorem 8.37]{Rot}.
See also \cite[Theorem 171]{K}).

%%%%%%%%%%%%%%%%%%%%%%%%%%%%%%%%%%%%%%%%%%%%%%%%%%%%%%%%%%%%%%%%%%
%%%%%%%%%%%%%%%%%%%%%%%%%%%%%%%%%%%%%%%%%%%%%%%%%%%%%%%%%%%%%%%%%%
\begin{corollary}\label{p:2}
Let $R$ be a Noetherian ring and $X_1,...,X_n$ be indeterminates
over $R$. Let $P$ be a prime ideal of $R[X_1,...,X_n]$ with
$p:=P\cap R$. Then:
$$\cdim(R[X_1,...,X_n]_P)=\cdim(R_p).$$ In particular, $R[X_1,...,X_n]$ is regular if and only if $R$ is regular.
\end{corollary}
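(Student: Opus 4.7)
The proof will be immediate once we combine Theorem~\ref{p:1} with the special chain theorem recalled as equation~(\ref{sct}). The plan is to subtract the dimension formula from the embedding dimension formula, observing that the term $\htt\bigl(\tfrac{P}{p[X_1,\ldots,X_n]}\bigr)$ appears in both.

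Explicitly, Theorem~\ref{p:1} gives
$$\edim(R[X_1,\ldots,X_n]_P)=\edim(R_p)+\htt\left(\frac{P}{p[X_1,\ldots,X_n]}\right),$$
while the special chain theorem (\ref{sct}) gives
$$\dim(R[X_1,\ldots,X_n]_P)=\dim(R_p)+\htt\left(\frac{P}{p[X_1,\ldots,X_n]}\right).$$
Subtracting these two identities yields at once
$$\cdim(R[X_1,\ldots,X_n]_P)=\edim(R_p)-\dim(R_p)=\cdim(R_p),$$
which is the main assertion.

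For the ``in particular'' part, I would argue globally as follows. The ring $R[X_1,\ldots,X_n]$ is Noetherian by Hilbert's basis theorem whenever $R$ is. Suppose first that $R$ is regular, and let $P$ be any prime of $R[X_1,\ldots,X_n]$ with contraction $p:=P\cap R$. Then $\cdim(R_p)=0$ by the regularity of $R$, so the displayed equality forces $\cdim(R[X_1,\ldots,X_n]_P)=0$, i.e.\ $R[X_1,\ldots,X_n]_P$ is regular; hence $R[X_1,\ldots,X_n]$ is regular. Conversely, assume $R[X_1,\ldots,X_n]$ is regular and let $p\in\Spec(R)$. Take $P:=p[X_1,\ldots,X_n]$, which is prime with $P\cap R=p$. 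By hypothesis $\cdim(R[X_1,\ldots,X_n]_P)=0$, and so the displayed equality gives $\cdim(R_p)=0$, whence $R_p$ is regular. As $p$ was arbitrary, $R$ is regular.

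There is no real obstacle here: the work has already been done in Theorem~\ref{p:1} (the embedding dimension side) and in the classical special chain theorem (the Krull dimension side). The only thing to be careful about is matching the two formulas through the common ``fibre'' term $\htt\bigl(P/p[X_1,\ldots,X_n]\bigr)$, and verifying that every prime $p$ of $R$ does arise as the contraction of some prime of the polynomial ring (which is trivial, since $p[X_1,\ldots,X_n]$ always works).
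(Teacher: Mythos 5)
Your proof is correct and takes essentially the same route the paper intends: the paper presents Corollary~\ref{p:2} as an immediate application of Theorem~\ref{p:1}, obtained exactly as you do by subtracting the special chain theorem formula (\ref{sct}) from the embedding-dimension formula through the common term $\htt\left(\frac{P}{p[X_1,\ldots,X_n]}\right)$. Your explicit globalization of the ``in particular'' statement, using $P:=p[X_1,\ldots,X_n]$ to realize each prime $p$ of $R$ as a contraction, merely fills in routine details the paper leaves implicit.
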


Theorem~\ref{p:1} allows us to characterize the regularity for two
famous localizations of polynomial rings; namely, Nagata rings and
Serre conjecture rings. Let $R$ be a ring and $X,X_1,...,X_n$
indeterminates over $R$. Recall that
$R(X_1,...,X_n)=S^{-1}R[X_1,...,X_n]$ is the Nagata ring, where $S$
is the multiplicative set of $R[X_1,...,X_n]$ consisting of the
polynomials whose coefficients generate $R$. Let $R\langle
X\rangle:=U^{-1}R[X]$, where $U$ is the multiplicative set of monic
polynomials in $R[X]$, and $R\langle X_1,\cdots,X_n\rangle:=R\langle
X_1,...,X_{n-1}\rangle\langle X_n\rangle$. Then $R\langle
X_1,...,X_n\rangle$ is called the Serre conjecture ring and is a
localization of $R[X_1,...,X_n]$.

%%%%%%%%%%%%%%%%%%%%%%%%%%%%%%%%%%%%%%%%%%%%%%%%%%%%%%%%%%%%%%%%%%
%%%%%%%%%%%%%%%%%%%%%%%%%%%%%%%%%%%%%%%%%%%%%%%%%%%%%%%%%%%%%%%%%%
\begin{corollary}\label{p:3}
Let $R$ be a Noetherian ring and $X_1,...,X_n$ indeterminates over
$R$. Let $S$ be a multiplicative subset of $R[X_1, ...,X_n]$.  Then:
\begin{enumerate}
\item $S^{-1}R[X_1,...,X_n]$ is regular if and only if $R_p$ is regular for each prime ideal $p$ of $R$ such that $p[X_1,...,X_n]\cap S=\emptyset$.
\item In particular, $R(X_1,...,X_n)$ is regular if and only if $R\langle X_1,...,X_n\rangle$ is regular if and only if $R[X_1,...,X_n]$ is regular if and only if $R$ is regular.
\end{enumerate}
\end{corollary}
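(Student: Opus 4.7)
The plan is to combine Corollary~\ref{p:2} with the standard bijection between primes of a localization and primes of the original ring disjoint from the multiplicative set. For (a), recall that a Noetherian ring is regular if and only if it is locally regular at every prime. The primes of $S^{-1}R[X_1,\ldots,X_n]$ correspond to the primes $P$ of $R[X_1,\ldots,X_n]$ with $P\cap S=\emptyset$, and $(S^{-1}R[X_1,\ldots,X_n])_{S^{-1}P}\cong R[X_1,\ldots,X_n]_P$. By Corollary~\ref{p:2}, the latter is regular iff $R_{P\cap R}$ is regular. Hence $S^{-1}R[X_1,\ldots,X_n]$ is regular iff $R_{P\cap R}$ is regular for every prime $P$ of $R[X_1,\ldots,X_n]$ with $P\cap S=\emptyset$.

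It remains to identify the set of such contractions with $\{p\in\Spec(R): p[X_1,\ldots,X_n]\cap S=\emptyset\}$. One inclusion is immediate: if $P\cap S=\emptyset$ and $p:=P\cap R$, then $p[X_1,\ldots,X_n]\subseteq P$ forces $p[X_1,\ldots,X_n]\cap S=\emptyset$. Conversely, given $p\in\Spec(R)$ with $p[X_1,\ldots,X_n]\cap S=\emptyset$, the ideal $p[X_1,\ldots,X_n]$ is itself prime (since $(R/p)[X_1,\ldots,X_n]$ is a domain), contracts to $p$, and is disjoint from $S$, so it serves as the required $P$. This proves (a).

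For (b), apply (a) to the two relevant multiplicative sets. For the Nagata ring, the defining set $S$ consists of polynomials whose coefficients generate $R$; for any proper prime $p$, every $f\in p[X_1,\ldots,X_n]$ has content contained in $p\neq R$, hence $f\notin S$, so $p[X_1,\ldots,X_n]\cap S=\emptyset$ for all $p\in\Spec(R)$. By (a), $R(X_1,\ldots,X_n)$ is regular iff $R_p$ is regular for every $p\in\Spec(R)$, i.e., iff $R$ is regular. For the Serre conjecture ring, one writes $R\langle X_1,\ldots,X_n\rangle=U^{-1}R[X_1,\ldots,X_n]$ for some multiplicative set $U$ built iteratively from monic polynomials; an induction on $n$ shows that $p[X_1,\ldots,X_n]\cap U=\emptyset$ for every proper prime $p$. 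The base case $n=1$ is immediate, since a monic polynomial has leading coefficient $1\notin p$; for the inductive step, after clearing denominators, each generator of $U$ has leading coefficient (with respect to $X_n$) a monic polynomial in $X_1,\ldots,X_{n-1}$ over $R\langle X_1,\ldots,X_{n-2}\rangle$, which by induction does not lie in $p[X_1,\ldots,X_{n-1}]$. Hence (a) again yields that $R\langle X_1,\ldots,X_n\rangle$ is regular iff $R$ is regular, and combining with Corollary~\ref{p:2} completes the chain of equivalences.

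The only nontrivial point is the verification for the Serre conjecture ring: one must describe its multiplicative set as a subset of $R[X_1,\ldots,X_n]$ carefully and confirm it avoids $p[X_1,\ldots,X_n]$ for every proper prime $p$. Conceptually, this is the assertion that $R\langle X_1,\ldots,X_n\rangle$ is faithfully flat over $R$, so the extension of any proper ideal of $R$ remains proper. Once this is established, the rest is bookkeeping through part (a) and Corollary~\ref{p:2}.
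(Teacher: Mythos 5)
Your proposal is correct and follows essentially the same route as the paper: both pass through the correspondence $\Spec(S^{-1}R[X_1,\dots,X_n])\leftrightarrow\{P : P\cap S=\emptyset\}$ together with the codimension identity $\cdim(R[X_1,\dots,X_n]_P)=\cdim(R_p)$ (the paper re-derives it at $Q=S^{-1}P$ from Theorem~\ref{p:1} and the special chain theorem, while you simply cite Corollary~\ref{p:2}), and then identify the contractions $P\cap R$ with the primes $p$ satisfying $p[X_1,\dots,X_n]\cap S=\emptyset$. You actually supply details the paper only asserts — the primality of $p[X_1,\dots,X_n]$ in the set identification, and the disjointness of $p[X_1,\dots,X_n]$ from the Nagata and Serre multiplicative sets (via unit content, respectively iterated monic leading coefficients or faithful flatness) — so your write-up is, if anything, more complete on those points.
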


%%%%%%%%%%%%%%%%%%%%%%%%%%%%%%%%
%%%%%%%%%%%%%%%%%%%%%%%%%%%%%%%%
\begin{proof}
(a) Let $Q=S^{-1}P$ be a prime ideal of $S^{-1}R[X_1,...,X_n]$,
where $P$ is the inverse image of $Q$ by the canonical homomorphism
$R[X_1,...,X_n]\rightarrow S^{-1}R[X_1,...,X_n]$ and let $p:=P\cap
R$. Notice that $S^{-1}R[X_1,...,X_n]_{Q}\cong R[X_1,...,X_n]_P$
and\\
${\dfrac{Q}{S^{-1}p[X_1,...,X_n]}}\cong{\overline{S}}^{-1}{\dfrac
P{p[X_1,...,X_n]}}$ where $\overline{S}$ denotes the image of $S$
via the natural homomorphism $R[X_1,...,X_n]\rightarrow {\frac
Rp}[X_1,...,X_n].$ Therefore, by (\ref{sct}), we obtain
\begin{equation}\label{dim}
\dim(S^{-1}R[X_1,...,X_n]_{Q})
=\dim(R[X_1,...,X_n]_P)=\dim(R_p)+{\htt\Big(\dfrac{Q}{S^{-1}p[X_1,...,X_n]}\Big)}
\end{equation}
and, by Theorem~\ref{p:1}, we have
\begin{equation}\label{edim}
\begin{array}{lll}
\edim(S^{-1}R[X_1,...,X_n]_{Q}) &=&\edim(R[X_1,...,X_n]_P)\\
                                &=&\edim(R_p)+{\htt\Big(\dfrac{Q}{S^{-1}p[X_1,...,X_n]}\Big)}.
\end{array}
\end{equation}

Now, observe that the set $\{Q\cap R\mid Q$ is a prime ideal of
$S^{-1}R[X_1,...,X_n]\}$ is equal to the set $\{p\mid p$ is a prime
ideal of $R$ such that $p[X_1,...,X_n]\cap S=\emptyset\}$.
Therefore, (\ref{dim}) and (\ref{edim}) lead to the conclusion.

(b) Combine (a) with the fact that the extension of any prime ideal
of $R$ to $R[X_1,...,X_n]$ does not meet the multiplicative sets
related to the rings $R(X_1,...,X_n)$ and  $R\langle
X_1,...,X_n\rangle$.
\end{proof}

%%%%%%%%%%%%%%%%%%%%%%%%%%%%%%%%%%%%%%%%%%%%%%%%%%%%%%%%%%%%%%%%%%%%%%%%%%%%%%%%%%%%%%%%%%%%
%%%%%%%%%%%%%%%%%%%%%%%%%%%%%%%%%%%%%%%%%%%%%%%%%%%%%%%%%%%%%%%%%%%%%%%%%%%%%%%%%%%%%%%%%%%%
%%%%%%%%%%%%%%%%%%%%%%%%%%%%%%%%%%%%%%%%%%%%%%%%%%%%%%%%%%%%%%%%%%%%%%%%%%%%%%%%%%%%%%%%%%%%
%%%%%%%%%%%%%%%%%%%%%%%%%%%%%%%%%%%%%%%%%%%%%%%%%%%%%%%%%%%%%%%%%%%%%%%%%%%%%%%%%%%%%%%%%%%%
\section{Embedding dimension and codimension of tensor products issued from separable extension fields}\label{s}

\noindent This section establishes an analogue of the ``special
chain theorem" for the embedding dimension of Noetherian tensor
products issued from separable extension fields, with effective
consequences on the transfer or defect of regularity. Namely, due to
known behavior of a tensor product $A\otimes_kB$ of two $k$-algebras
subject to regularity (cf. \cite{BK1,Gr,HTY,M,TY,WITO}), we will
investigate the case where $A$ or $B$ is a separable (not
necessarily algebraic) extension field of $k$.

Throughout, let $A$ and $B$ be two $k$-algebras such that
$A\otimes_kB$ is Noetherian and let $P$ be a prime ideal of
$A\otimes_kB$ with $p:=P\cap A$ and $q:=P\cap B$. Recall that $A$
and $B$ are Noetherian too; and the converse is not true, in
general, even if $A=B$ is an extension field of $k$ (cf.
\cite[Corollary 3.6]{Fer} or \cite[Theorem 11]{V}). We assume
familiarity with the natural isomorphisms for tensor products and
their localizations as in  \cite{B4-7,BAC8-9,Rot}. In particular, we
identify $A$ and $B$ with their respective images in $A\otimes_kB$
and we have $\dfrac {A\otimes_kB}{p\otimes_kB+A\otimes_kq}\cong
\dfrac Ap \otimes_k\dfrac Bq$ and $A_p\otimes_kB_q\cong
S^{-1}(A\otimes_kB)$ where $S:=\{s\otimes t\mid s\in A\setminus p,
t\in B\setminus q\}$. Throughout this and next sections, we adopt the following simplified notation for the invariant $\mu$:
$$\mu_P(pA_p):=\mu_{(A\otimes_kB)_P}^{i}(pA_p)\ \text{ and }\
\mu_P(qA_q):=\mu_{(A\otimes_kB)_P}^{j}(qB_q)$$
where $i:A_p\longrightarrow (A\otimes_kB)_P$ and $j:B_q\longrightarrow (A\otimes_kB)_P$ are the canonical (local flat) ring homomorphisms.

Recall that $A\otimes_kB$ is Cohen-Macaulay (resp., Gorenstein,
locally complete intersection) if and only if so are $A$ and the
fibre rings $\kappa_A(p)\otimes_kB$ (for each prime ideal $p$ of
$A$) \cite{BK2,TY}. Also if $A$ and the fibre rings
$\kappa_A(p)\otimes_kB$ are regular then so is  $A\otimes_kB$
\cite[Theorem 23.7(ii)]{M}. However, the converse does not hold in
general; precisely, if $A\otimes_kB$ is regular then so is $A$
\cite[Theorem 23.7(i)]{M} but the fibre rings are not necessarily
regular (see \cite[Example 2.12(iii)]{BK1}).

From \cite[Proposition 2.3]{BK2} and its proof, recall an analogue
of the special chain theorem (recorded in (\ref{sct})) for the tensor
products which correlates the dimension of $(A\otimes_kB)_{P}$ to
the dimension of its fibre rings; namely,
\begin{equation}\label{scte}
\begin{array}{lll}
\dim(A\otimes_kB)_P    &=&\dim(A_p)\ +\ \htt{\left(\dfrac P{p\otimes_kB}\right)}\\
                        &=&\dim(A_p)\ +\  \dim\left(\Big(\kappa_A(p)\otimes_kB\Big)_{\frac{P_p}{pA_p\otimes_kB}}\right)
\end{array}
\end{equation}

Our first result reformulates Proposition~\ref{n:1} and thus gives
an analogue of the special chain theorem for the embedding dimension
in the context of tensor products of algebras over a field.

%%%%%%%%%%%%%%%%%%%%%%%%%%%%%%%%%%%%%%%%%%%%%%%%%%%%%%%%%%%%%%%%%%
%%%%%%%%%%%%%%%%%%%%%%%%%%%%%%%%%%%%%%%%%%%%%%%%%%%%%%%%%%%%%%%%%%
\begin{proposition}\label{f:1}
Let $A$ and $B$ be two $k$-algebras such that  $A\otimes_kB$ is
Noetherian and let $P$ be a prime ideal of $A\otimes_kB$ with
$p:=P\cap A$ and $q:=P\cap B$. Then:
\begin{enumerate}
\item $\edim(A\otimes_kB)_{P}=\mu_P(pA_{p}) +
\edim\left(\Big(\kappa_A(p)\otimes_kB\Big)_{\frac{P_p}{pA_p\otimes_kB}}\right)$.

\item $\cdim(A\otimes_kB)_{P} +
\left(\edim(A_{p})-\mu_P(pA_{p})\right)=$

\hfill$\cdim(A_{p}) +\cdim\left(\big(\kappa_A(p)\otimes_kB\big)_{\frac{P_p}{pA_p\otimes_kB}}\right)$.

\item $(A\otimes_kB)_{P}$ is regular and
$\mu_P(pA_{p})=\edim(A_{p})$ if and only if both $A_{p}$ and
$\Big(\kappa_A(p)\otimes_kB\Big)_{\frac{P_p}{pA_p\otimes_kB}}$ are
regular.
\end{enumerate}
\end{proposition}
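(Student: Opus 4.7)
The plan is to deduce all three assertions from the general machinery of Section~2 by applying it to the canonical ring homomorphism $i\colon A_{p}\longrightarrow (A\otimes_{k}B)_{P}$. First, I would check three properties of $i$: it is well defined (elements of $A\setminus p$ become units in $(A\otimes_{k}B)_{P}$ since their images in $A\otimes_{k}B$ avoid $P$, as $P\cap A=p$); it is a local homomorphism (as $p\subseteq P$ under the canonical identification); and it is flat (being the composite of the base change $A_{p}\to A_{p}\otimes_{k}B$ with a further localization, both of which are flat). Flatness will in particular yield going-down, which is needed for part~(c).

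The key technical step is the identification of the fibre of $i$. Using the standard isomorphism $A_{p}\otimes_{k}B\cong S^{-1}(A\otimes_{k}B)$ with $S=\{s\otimes 1:s\in A\setminus p\}$, one obtains $(A_{p}\otimes_{k}B)_{P_{p}}\cong (A\otimes_{k}B)_{P}$; quotienting $A_{p}\otimes_{k}B$ by $pA_{p}\otimes_{k}B$ produces $\kappa_{A}(p)\otimes_{k}B$; and these two operations commute. This yields the natural isomorphism
\[
(A\otimes_{k}B)_{P}\big/pA_{p}(A\otimes_{k}B)_{P}\;\cong\;\Big(\kappa_{A}(p)\otimes_{k}B\Big)_{P_{p}/(pA_{p}\otimes_{k}B)}.
\]
With this identification and the convention $\mu_{P}(pA_{p})=\mu_{(A\otimes_{k}B)_{P}}^{i}(pA_{p})$ established in the preamble, part~(a) is simply Proposition~\ref{n:1} applied to $i$ with $I=pA_{p}$.

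Part~(b) I would obtain by combining~(a) with the special chain theorem for dimension recorded in~(\ref{scte}). Subtracting $\dim(A\otimes_{k}B)_{P}=\dim(A_{p})+\dim\big(\text{fibre}\big)$ from the formula in~(a) gives
\[
\cdim(A\otimes_{k}B)_{P}=\mu_{P}(pA_{p})-\dim(A_{p})+\cdim\big(\text{fibre}\big),
\]
and adding $\edim(A_{p})-\mu_{P}(pA_{p})$ to both sides produces the stated identity. Part~(c) is then a direct application, to the flat homomorphism $i$, of the item of the flat-map corollary of Proposition~\ref{n:1} that asserts ``$B$ is regular and $\mu_{B}^{f}(\m)=\edim(A)$ iff $A$ and $B/\m B$ are regular''.

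The main obstacle I anticipate is the fibre identification above: it requires carefully commuting quotients with localizations and interpreting the notation $P_{p}$ as the extension $P(A_{p}\otimes_{k}B)$. Once this bookkeeping is in place, the three parts become almost immediate specializations of the results of Section~2 to the flat local homomorphism $i$.
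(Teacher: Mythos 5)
Your proposal is correct and matches the paper's own (implicit) argument: the paper presents Proposition~\ref{f:1} precisely as a reformulation of Proposition~\ref{n:1} applied, with $I=pA_p$, to the canonical local flat homomorphism $i\colon A_p\longrightarrow (A\otimes_kB)_P$ fixed in the preamble of Section~\ref{s}, with the fibre identification $(A\otimes_kB)_P/pA_p(A\otimes_kB)_P\cong\big(\kappa_A(p)\otimes_kB\big)_{P_p/(pA_p\otimes_kB)}$ exactly as you describe. Your derivation of part (b) from part (a) together with (\ref{scte}), and of part (c) from item (c) of the going-down corollary (flatness giving GD), is the same computation the paper's corollaries encode, so there is nothing to add.
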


Recall that an extended form of the special chain theorem \cite{BK2}
states that
$$\dim(A\otimes_kB)_{P}=\dim(A_{p})+\dim(B_{q})+\dim\left(\Big(\kappa_A(p)\otimes_k\kappa_B(q)\Big)_{\frac{P(A_p\otimes_kB_q)}{pA_p\otimes_kB_{q}+
A_p\otimes_kqB_{q}}}\right).$$
In this vein, notice that, via Proposition~\ref{f:1}(a), we always
have the following inequalities:
$$
\begin{array}{lll}
\edim(A\otimes_kB)_{P}  &\leq   &\edim(A_{p})\ +\ \edim\left(\Big(\kappa_A(p)\otimes_kB\Big)_{\frac{P_p}{pA_p\otimes_kB}}\right)\\
                        &\leq   &\edim(A_{p})\ +\ \edim(B_{q})\\
                        &       & \hspace{1.4cm}+\ \edim\left(\Big(\kappa_A(p)\otimes_k\kappa_B(q)\Big)_{\frac{P(A_p\otimes_{k}B_q)}{pA_p\otimes_kB_q +
                        A_p\otimes_{k} qB_q}}\right).
\end{array}
$$

Let us state the main theorem of this section.

%%%%%%%%%%%%%%%%%%%%%%%%%%%%%%%%%%%%%%%%%%%%%%%%%%%%%%%%%%%%%%%%%%
%%%%%%%%%%%%%%%%%%%%%%%%%%%%%%%%%%%%%%%%%%%%%%%%%%%%%%%%%%%%%%%%%%
\begin{thm}\label{s:1}
Let $K$ be a separable extension field of $k$ and $A$ a $k$-algebra
such that $K\otimes_kA$ is Noetherian. Let $P$ be a prime ideal of
$K\otimes_kA$ with $p:=P\cap A$. Then:
$$
\begin{array}{lll}
\edim(K\otimes_kA)_P    &=&\edim(A_p)\ +\ \htt{\left(\dfrac P{K\otimes_kp}\right)}\\
                        &=&\edim(A_p)\ +\  \edim\left(\Big(K\otimes_k\kappa_A(p)\Big)_{\frac{P_p}{K\otimes_kpA_p}}\right)
\end{array}
$$
If, in addition, $K$ is algebraic over $k$, then
$\edim(K\otimes_kA)_{P} = \edim(A_{p}).$
\end{thm}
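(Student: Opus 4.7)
The strategy is to apply Proposition~\ref{f:1}(a) to $A\otimes_kK\cong K\otimes_kA$, which yields
\[
\edim(K\otimes_kA)_P \;=\; \mu_P(pA_p)\;+\;\edim\!\left(\bigl(\kappa_A(p)\otimes_kK\bigr)_{\frac{P_p}{pA_p\otimes_kK}}\right),
\]
and to reduce the theorem to two independent claims: (i) the fibre ring on the right is regular, so its height and embedding dimension coincide, matching the two displays in the statement; and (ii) $\mu_P(pA_p)=\edim(A_p)$. For~(i), separability of $K/k$ is equivalent to $K$ being formally (i.e.\ $0$-)smooth over $k$; this property is preserved by the base change $k\to\kappa_A(p)$, and under the Noetherian hypothesis on $K\otimes_kA$ it forces the fibre ring $K\otimes_k\kappa_A(p)$ and all its localizations to be regular.

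For~(ii), set $S:=(K\otimes_kA)_P$, $\m:=pA_pS$, $\n:=P_P$, and $L:=S/\n$. The equality $\mu_P(pA_p)=\edim(A_p)$ is equivalent to the injectivity of the canonical map $pA_p/p^2A_p\otimes_{\kappa_A(p)}L\to\n/\n^2$, which in turn (since $A_p\to S$ is flat) amounts to $\m\cap\n^2=\m\n$. I would prove this identity by lifting a regular system of parameters $\bar y_1,\dots,\bar y_t$ of the regular local ring $S/\m$ to elements $y_1,\dots,y_t\in\n$; flatness of $A_p\to S$ promotes $(y_j)$ to an $S$-regular sequence and gives $\n=\m+(y)$. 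Expanding, $\n^2=\m\n+(y)^2$, so the task reduces to showing $\m\cap(y)^2\subset\m\n$.

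Given $z=\sum c_{jk}y_jy_k\in\m$, reducing modulo $\m$ and invoking that the monomials $\bar y_j\bar y_k$ are linearly independent in $\bar\n^2/\bar\n^3$ (by regularity of $S/\m$) forces $c_{jk}\in\n=\m+(y)$. Splitting each $c_{jk}$ into its $\m$- and $(y)$-parts yields $z\in\m\n+(y)^3$, and iterating puts $z\in\m\n+(y)^N$ for every $N$. Noetherianity of $S$ together with the Krull intersection theorem applied in $S/\m\n$ then gives $z\in\m\n$, completing~(ii). Finally, if $K/k$ is separable algebraic, then $K\otimes_k\kappa_A(p)$ is integral and reduced over $\kappa_A(p)$, hence zero-dimensional; the fibre localizes to a field of embedding dimension $0$, and we recover $\edim(K\otimes_kA)_P=\edim(A_p)$.

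The hard part will be the iterative descent producing $\m\cap(y)^2\subset\m\n$: it depends simultaneously on regularity of the fibre (for the key linear independence of degree-two monomials in $\bar y$) and on Noetherianity of $S$ (to close the argument via Krull intersection), and I see no route that avoids walking up the chain of powers of $(y)$ step by step.
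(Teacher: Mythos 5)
Your proposal is correct, and it reaches Theorem~\ref{s:1} by a genuinely different route than the paper. The paper argues by a three-step reduction: for $K$ separable \emph{algebraic} it shows $P_P=(K\otimes_kp)_P$ (the fibre $K\otimes_k\kappa_A(p)$ being zero-dimensional, reduced, hence von Neumann regular) and invokes Lemma~\ref{s:1.1}, itself an induction on $\edim$ via \cite[Theorem 159]{K}; for $K$ finitely generated separable it uses a separating transcendence base to reduce to the algebraic case combined with the polynomial special chain theorem (Theorem~\ref{p:1}); and for arbitrary $K$ it descends to a finite subextension via Lemma~\ref{s:1.2}, recovers the height term by lifting a finite chain of primes into a suitable finite subextension, and only at the very end converts height into the embedding dimension of the fibre through \cite[Lemma 6.7.4.1]{Gr}. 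You instead apply Proposition~\ref{f:1}(a) once and prove directly that $\mu_P(pA_p)=\edim(A_p)$: your identification of this equality with $\m\cap\n^2=\m\n$ (where $S:=(K\otimes_kA)_P$, $\m:=pA_pS$, $\n:=P_P$) correctly uses flatness of $A_p\to S$ to identify the source of the map with $\m/\m\n$; lifting a regular system of parameters $y_1,\dots,y_t$ of the regular local fibre $S/\m$ gives $\n=\m+(y)$ and $\n^2=\m\n+(y)^2$, and your descent pushing $z\in\m\cap(y)^N$ into $\m\n+(y)^{N+1}$ is sound because the associated graded ring of the regular local ring $S/\m$ is a polynomial ring, so the degree-$N$ monomials in the images of the $y_j$ are linearly independent over the residue field; Krull intersection in the Noetherian local ring $S/\m\n$ then closes the loop (the $S$-regularity of the sequence $(y)$, while true by the local flatness criterion, is not actually needed downstream). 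What your route buys: it bypasses Theorem~\ref{p:1} and Lemma~\ref{s:1.2} entirely and in fact establishes the stronger general principle that a flat local homomorphism of Noetherian local rings with \emph{regular} closed fibre satisfies $\edim(B)=\edim(A)+\edim(B/\m B)$, which subsumes Lemma~\ref{s:1.1} (whose hypothesis forces the localized fibre to be a field); separability enters only through regularity of the fibre, and since your formal-smoothness argument for claim (i) is essentially the standard proof of Grothendieck's lemma, you could simply cite \cite[Lemma 6.7.4.1]{Gr} there, exactly as the paper does in its Step 3. What the paper's route buys: its core equality is obtained from elementary inductions and prime-chain arguments, deferring the regular-fibre input to the final reformulation, and its intermediate results (Theorem~\ref{p:1}, Lemma~\ref{s:1.2}) are infrastructure reused later, notably in the proof of Theorem~\ref{r:1}.
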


The proof of this theorem requires the following two preparatory
lemmas; the first of which determines a formula for the embedding
dimension of the tensor product of two $k$-algebras $A$ and $B$
localized at a special prime ideal $P$ with no restrictive
conditions on $A$ or $B$.

%%%%%%%%%%%%%%%%%%%%%%%%%%%%%%%%%%%%%%%%%%%%%%%%%%%%%%%%%%%%%%%%%%
%%%%%%%%%%%%%%%%%%%%%%%%%%%%%%%%%%%%%%%%%%%%%%%%%%%%%%%%%%%%%%%%%%
\begin{lemma}\label{s:1.1}
Let $A$ and $B$ be two $k$-algebras such that  $A\otimes_kB$ is
Noetherian and let $P$ be a prime ideal of $A\otimes_kB$ with
$p:=P\cap A$ and $q:=P\cap B$. Assume that
$P_{P}=(p\otimes_{k}B+A\otimes_{k}q)_{P}$. Then:
\begin{enumerate}
\item $\mu_P(pA_{p})=\edim(A_{p})$ and $\mu_P(qB_{q})=\edim(B_{q})$.
\item $\edim(A\otimes_kB)_{P}=\edim(A_{p})+\edim(B_{q}).$
\end{enumerate}
\end{lemma}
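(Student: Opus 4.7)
The plan is to prove (a) directly and then deduce (b) from it via Proposition~\ref{n:1}. Set $R:=(A\otimes_k B)_P$ and write $\mathfrak{p}:=pA_p$, $\mathfrak{q}:=qB_q$, so the hypothesis reads $P_P=\mathfrak{p}R+\mathfrak{q}R$. The canonical maps $i:A_p\to R$ and $j:B_q\to R$ are flat local, since tensoring over the field $k$ is flat and localization is flat. To prove $\mu_P(pA_p)=\edim(A_p)$, I need to show that the subspace $(\mathfrak{p}R+P_P^2)/P_P^2$ of $P_P/P_P^2$ has $\kappa(P)$-dimension equal to $\edim(A_p)$. I will obtain this by producing a chain of canonical isomorphisms
$$(\mathfrak{p}/\mathfrak{p}^2)\otimes_{\kappa_A(p)}\kappa(P)\;\cong\;\mathfrak{p}R/(P_P\,\mathfrak{p}R)\;\cong\;(\mathfrak{p}R+P_P^2)/P_P^2,$$
the first coming from flatness and the second from a specific intersection identity.

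For the first isomorphism, flatness of $i$ applied to $0\to\mathfrak{p}^2\to\mathfrak{p}\to\mathfrak{p}/\mathfrak{p}^2\to 0$ identifies $\mathfrak{p}R\cong \mathfrak{p}\otimes_{A_p}R$, and right-exactness then gives $\mathfrak{p}R/(P_P\mathfrak{p}R)\cong \mathfrak{p}\otimes_{A_p}\kappa(P)\cong(\mathfrak{p}/\mathfrak{p}^2)\otimes_{\kappa_A(p)}\kappa(P)$, a $\kappa(P)$-vector space of dimension $\edim(A_p)$. The second isomorphism amounts to the key identity
$$\mathfrak{p}R\cap P_P^2 \;=\; P_P\,\mathfrak{p}R,$$
which is the heart of the proof. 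The inclusion $\supseteq$ is automatic. For the reverse, expand $P_P^2=\mathfrak{p}^2R+\mathfrak{p}\mathfrak{q}R+\mathfrak{q}^2R$ using the hypothesis; the only part of $\mathfrak{p}R\cap P_P^2$ not automatically in $P_P\,\mathfrak{p}R=\mathfrak{p}^2R+\mathfrak{p}\mathfrak{q}R$ is $\mathfrak{p}R\cap\mathfrak{q}^2R$. I control this via the tensor-product-over-a-field identity
$$(I\otimes_k B_q)\cap(A_p\otimes_k J)\;=\;I\otimes_k J \quad\text{in }\; A_p\otimes_k B_q,$$
valid for all ideals $I\subseteq A_p$ and $J\subseteq B_q$, which is immediate from choosing $k$-vector space complements $A_p=I\oplus V$ and $B_q=J\oplus W$. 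Applied with $I=\mathfrak{p}$ and $J=\mathfrak{q}^2$, then extended to the localization $R$ (intersections commute with localization), this yields $\mathfrak{p}R\cap\mathfrak{q}^2R\subseteq\mathfrak{p}\mathfrak{q}R\subseteq P_P\,\mathfrak{p}R$, completing the identity. So $\mu_P(pA_p)=\edim(A_p)$; interchanging the roles of $A$ and $B$ proves $\mu_P(qB_q)=\edim(B_q)$, finishing (a).

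For (b), apply Proposition~\ref{n:1} to $i:A_p\to R$ with $I=pA_p$: $\edim(R)=\mu_P(pA_p)+\edim(R/\mathfrak{p}R)$. Part (a) gives $\mu_P(pA_p)=\edim(A_p)$. It remains to verify $\edim(R/\mathfrak{p}R)=\edim(B_q)$. The quotient $R/\mathfrak{p}R$ is local with maximal ideal $\mathfrak{q}(R/\mathfrak{p}R)$ by the hypothesis, and the induced map $B_q\to R/\mathfrak{p}R$ factors as $B_q\to\kappa_A(p)\otimes_k B_q\to R/\mathfrak{p}R$, hence is flat local. For any flat local $f:(A,m,K)\to(B,n,L)$ with $mB=n$ one has $n^2=m^2B$, whence $n/n^2\cong(m/m^2)\otimes_K L$ as $L$-vector spaces and therefore $\edim(B)=\edim(A)$; applied here, this yields $\edim(R/\mathfrak{p}R)=\edim(B_q)$, completing (b). The main obstacle is the key intersection identity $\mathfrak{p}R\cap P_P^2=P_P\,\mathfrak{p}R$, which relies in an essential way on $R$ arising from a tensor product over a field and would fail for arbitrary flat local extensions.
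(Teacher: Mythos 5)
Your proof is correct, but it takes a genuinely different route from the paper's. The paper proceeds in two steps: first the case where $B$ is an extension field of $k$, handled by induction on $\edim(A_p)$ using Kaplansky's Theorems 158/159 (quotienting by a minimal generator $a_1$ with $\frac{a_1}{1}\in P_P\setminus P_P^2$, the existence of which is extracted from the regularity transfer result \cite[Theorem 23.7(i)]{M} and the dimension formula); then the general case by applying Proposition~\ref{f:1} twice to reduce to fibre rings over $\kappa_B(q)$ and $\kappa_A(p)$ and chasing the inequalities $\mu_P(pA_p)\leq\edim(A_p)$ and $\mu_{\frac{P_p}{pA_p\otimes_kB}}(qB_q)\leq\mu_P(qB_q)\leq\edim(B_q)$ to squeeze out the equalities in (a). You instead compute $\mu_P(pA_p)$ directly: flat base change along $i:A_p\to R:=(A\otimes_kB)_P$ identifies $\mathfrak{p}R/P_P\mathfrak{p}R$ with $(\mathfrak{p}/\mathfrak{p}^2)\otimes_{\kappa_A(p)}\kappa(P)$, and your key identity $\mathfrak{p}R\cap P_P^2=P_P\mathfrak{p}R$ — reduced, after expanding $P_P^2$ under the hypothesis, to $(I\otimes_kB_q)\cap(A_p\otimes_kJ)=I\otimes_kJ$, which is valid by choosing $k$-linear complements and passes to $R$ since localization commutes with finite intersections — converts this into the dimension count $\dim_{\kappa(P)}\frac{\mathfrak{p}R+P_P^2}{P_P^2}=\edim(A_p)$. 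Your deduction of (b) from (a) via Proposition~\ref{n:1} and the standard fact that a flat local map with $\m B=\n$ preserves embedding dimension (applied to $B_q\to R/\mathfrak{p}R$, which is local flat as a localization of $\kappa_A(p)\otimes_kB_q$ over $B_q$) is also sound; note this reverses the paper's logical order, which essentially obtains (a) and (b) simultaneously. Your approach buys a shorter, induction-free argument that isolates exactly where the tensor-over-a-field structure enters (the intersection identity, which as you rightly remark fails for general flat local extensions); the paper's approach buys reuse of the $\mu$-calculus and of Proposition~\ref{f:1}, machinery it needs elsewhere anyway. Two cosmetic points: the identification $\mathfrak{p}R\cong\mathfrak{p}\otimes_{A_p}R$ comes from tensoring the inclusion $\mathfrak{p}\hookrightarrow A_p$ with $R$, not the sequence you cite; and the reduction of $\mathfrak{p}R\cap P_P^2$ to $\mathfrak{p}R\cap\mathfrak{q}^2R$ deserves the one-line justification that if $x=u+v+w$ with $u\in\mathfrak{p}^2R$, $v\in\mathfrak{p}\mathfrak{q}R$, $w\in\mathfrak{q}^2R$ and $x\in\mathfrak{p}R$, then $w=x-u-v\in\mathfrak{p}R\cap\mathfrak{q}^2R$.
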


%%%%%%%%%%%%%%%%%%%%%%%%%%%%%%%%
%%%%%%%%%%%%%%%%%%%%%%%%%%%%%%%%
\begin{proof} We proceed through two steps.

\noindent{\bf Step 1.} Assume that $K:=B$ is an extension field of
$k$. Then $q=(0)$ and $P_P=p_p(A_p\otimes_kK)_{P_p}$. Let
$n:=\edim(A_p)$ and let $a_{1}, ..., a_{n}$ be elements of $p$ such
that $p_p=\left(\frac{a_{1}}{1}, ...,\frac{a_{n}}{1}\right)A_p.$ Our
argument uses induction on $n$. If $n=0$, then $A_p$ is a field and
$p_p=(0)$; hence $P_P=(0)$, whence $\edim(A\otimes_kK)_P=0$, as
desired. Next, suppose $n\geq 1$. We have
$P_P=\left(\frac{a_{1}}{1},
...,\frac{a_{n}}{1}\right)(A\otimes_kK)_P.$ If
$\edim(A\otimes_kK)_P=0$, $(A\otimes_kK)_P$ is regular and so is
$A_p$ by \cite[Theorem 23.7(i)]{M}. Hence, $n=\dim(A_{p})=0$ by
(\ref{scte}). Absurd. So, necessarily, $\edim(A\otimes_kK)_P\geq 1$.
Without loss of generality, we may assume that $\frac{a_{1}}{1}\in
P_P\setminus P^2_P$. Note that we already have  $\frac{a_{1}}{1}\in
p_p\setminus p^2_p$. Now, $\dfrac{P}{(a_1)\otimes_kK}$ is a prime
ideal of $\dfrac A{(a_1)}\otimes_kK$ with
$\dfrac{P}{(a_1)\otimes_kK}\cap \dfrac{A}{(a_1)}=\dfrac{p}{(a_1)}.$
By  \cite[Theorem 159]{K}, we obtain $\edim\left(\left(\dfrac
A{(a_1)}\right)_{\frac p{(a_1)}}\right)=n-1.$ By induction, we get
$$\edim\left(\left(\dfrac A{(a_1)}\otimes_kK\right)_{\frac{P}{(a_1)\otimes_kK}}\right)=\edim\left(\left(\dfrac A{(a_1)}\right)_{\frac p{(a_1)}}\right).$$
We conclude, via \cite[Theorem 159]{K}, to get
$$\edim(A\otimes_kK)_P=1+\edim\left(\left(\dfrac A{(a_1)}\otimes_kK\right)_{\frac{P}{(a_1)\otimes_kK}}\right)=n.$$
Moreover, observe that $\Big(\kappa_{A}(p)\otimes_kK\Big)_{\frac
{P_p}{p_p\otimes_kK}}$ is a field as $P_P=(p\otimes_kK)_P$. By
Proposition~\ref{f:1}, we have
\begin{equation}\label{s:1.1eq1}\mu_P(pA_p)=\edim(A\otimes_kK)_P=\edim(A_p).\end{equation}

\noindent{\bf Step 2.} Assume that $B$ is an arbitrary $k$-algebra.
Since $P_{P}=(p\otimes_{k}B+A\otimes_{k}q)_{P}$, then
$P(A_p\otimes_kB_q)=pA_p\otimes_kB_{q}+A_p\otimes_kqB_{q}$, hence
$\Big(\kappa_A(p)\otimes_k\kappa_B(q)\Big)_{\frac{P(A_p\otimes_kB_q)}{pA_p\otimes_kB_{q}+A_p\otimes_kqB_{q}}}$
is an extension field of $k$. So, apply Proposition~\ref{f:1} twice
to get
\begin{equation}\label{s:1.1eq2}\edim(A\otimes_kB)_P=\mu_P(qB_q)+\mu_{\frac{P_{q}}{A\otimes_{k}qB_{q}}}(pA_p).\end{equation}
Further, notice that
$$\begin{array}{lll} {\Big(\dfrac
{P_q}{A\otimes_kqB_q}}\Big )_{\frac {P_q}{A\otimes_kqB_q}}&=&
{\dfrac{(P_q)_{P_q}}{(A\otimes_kqB_q)_{P_q}}}={\dfrac
{P_P}{(A\otimes_kq)_P}}=
{\dfrac{(p\otimes_kB+A\otimes_kq)_P}{(A\otimes_kq)_P}}\\
 &=&\Big (
{\dfrac{p\otimes_kB+A\otimes_kq}{A\otimes_kq}}\Big )_{\frac
P{A\otimes_kq}}\cong\Big ( {p\otimes_k\dfrac Bq}\Big )_{\frac
P{A\otimes_kq}}={\Big (p\otimes_k\kappa_{B}(q)}\Big )_{\frac
{P_q}{A\otimes_kqB_q}}.\end{array}$$
Therefore, by (\ref{s:1.1eq1}), we get
$$\mu_{\frac{P_{q}}{A\otimes_{k}qB_{q}}}(pA_p)=\edim\left(\Big({A\otimes_k\kappa_{B}(q)}\Big)_{\frac{P_q}{A\otimes_kqB_q}}\right)=\edim(A_p).$$
Similar arguments yield
$$\mu_{\frac{P_{p}}{pA_{p}\otimes_{k}B}}(qB_q)=\edim\left(({\kappa_{A}(p)}\otimes_kB)_{\frac{P_p}{pA_p\otimes_kB}}\right)=\edim(B_q)$$
and, by the facts $0\leq\mu_P(pA_{p})\leq\edim(A_{p})$ and $\mu_{\frac{P_p}{pA_p\otimes_kB}}(qB_{q})\leq\mu_P(qB_{q})$, we obtain
$$\mu_P(pA_{p})=\edim(A_{p})\ \text{ and }\ \mu_P(qB_{q})=\edim(B_{q})$$
completing the proof of the lemma via (\ref{s:1.1eq2}).
\end{proof}

The second lemma will allow us to reduce our investigation to
tensor products issued from finite extension fields.

%%%%%%%%%%%%%%%%%%%%%%%%%%%%%%%%%%%%%%%%%%%%%%%%%%%%%%%%%%%%%%%%%%
%%%%%%%%%%%%%%%%%%%%%%%%%%%%%%%%%%%%%%%%%%%%%%%%%%%%%%%%%%%%%%%%%%
\begin{lemma}\label{s:1.2}
Let $K$ be an extension field of $k$ and $A$ a $k$-algebra such that
$K\otimes_kA$ is Noetherian. Let $P$ be a prime ideal of
$K\otimes_kA$. Then, there exists a finite extension field $E$ of
$k$ contained in $K$ such that
$$\edim(K\otimes_kA)_{P}=\edim(F\otimes_kA)_{Q}$$
for each intermediate field $F$ between $E$ and $K$ and $Q:=P\cap
(F\otimes_kA)$.
\end{lemma}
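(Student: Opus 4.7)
The plan is to exploit the fact that each element of $K\otimes_k A$ involves only finitely many scalars from $K$, so a minimal generating set of $P_P$ is defined over a finitely generated subextension $E\subseteq K$. Concretely, set $n:=\edim((K\otimes_k A)_P)$ and pick $x_1,\dots,x_n\in P$ whose images in $(K\otimes_k A)_P$ form a minimal generating set of $P_P$. Each $x_i$ is a finite sum $\sum_j\lambda_{ij}\otimes a_{ij}$ with $\lambda_{ij}\in K$ and $a_{ij}\in A$; let $E:=k(\lambda_{ij})$, the finitely generated field extension of $k$ contained in $K$ obtained by adjoining these finitely many coefficients. For any intermediate field $F$ with $E\subseteq F\subseteq K$, each $x_i$ then lies in $F\otimes_k A$, hence in $Q:=P\cap (F\otimes_k A)$.

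Next, I would use that the inclusion $F\otimes_k A\hookrightarrow K\otimes_k A$ is faithfully flat (since $K$ is a free $F$-module), so the induced local homomorphism $\varphi:(F\otimes_k A)_Q\to (K\otimes_k A)_P$ is faithfully flat as well. Descent along $\varphi$ forces $(F\otimes_k A)_Q$ to be Noetherian, which will be needed to invoke Proposition~\ref{n:1}. The contraction-extension property for faithfully flat maps then gives
$(x_1,\dots,x_n)(F\otimes_k A)_Q = P_P\cap (F\otimes_k A)_Q = Q(F\otimes_k A)_Q$, so $Q(F\otimes_k A)_Q$ is generated by $n$ elements and $\edim((F\otimes_k A)_Q)\leq n$.

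For the reverse inequality, observe that since each $x_i\in Q$ and $(x_1,\dots,x_n)(K\otimes_k A)_P=P_P$, one has $Q(K\otimes_k A)_P=P_P$; hence the fibre $(K\otimes_k A)_P/Q(K\otimes_k A)_P$ is the residue field at $P$, of embedding dimension zero. Applying Proposition~\ref{n:1} to $\varphi$ with $I=Q(F\otimes_k A)_Q$ then reduces to $n=\edim((K\otimes_k A)_P)=\mu_{(K\otimes_k A)_P}^{\varphi}(Q(F\otimes_k A)_Q)$, and the inequality $\mu_B^f(I)\leq\mu_A(I)$ noted just after the definition of $\mu$ bounds this by $\edim((F\otimes_k A)_Q)$. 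The two inequalities together yield the claimed equality. I expect the main subtlety to be the descent of Noetherianness along $\varphi$, which must be in place before invoking Proposition~\ref{n:1}; it will follow from the standard fact that an ascending chain of ideals in the source extends to a chain in the Noetherian target, whose stabilization descends by faithful flatness.
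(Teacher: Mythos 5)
Your proof is correct, and it takes a genuinely different route from the paper's at the decisive step. The finiteness reduction is identical: like the paper, you adjoin to $k$ the finitely many coefficients of finitely many generators to form $E$ (and, exactly as in the paper's own proof, the resulting $E$ is finitely generated but not necessarily finite over $k$; the statement's word ``finite'' is already used loosely there). After that the two arguments part ways. The paper takes generators $z_1,\dots,z_t$ of the ideal $P$ itself, deduces the global identity $P=Q(K\otimes_kA)=K\otimes_FQ$, and then concludes by applying Lemma~\ref{s:1.1} to $K\otimes_F(F\otimes_kA)$, so the equality of embedding dimensions is routed through the inductive machinery of that lemma; you instead take only local generators $x_1,\dots,x_n$ of $P_P$ and replace Lemma~\ref{s:1.1} entirely by faithfully flat descent along the local flat homomorphism $\varphi\colon R:=(F\otimes_kA)_Q\to S:=(K\otimes_kA)_P$. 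Your individual steps are sound: $\varphi$ is flat (as $K$ is free over $F$) and local, hence faithfully flat; the contraction property $IS\cap R=I$ then gives $(x_1,\dots,x_n)R=P_P\cap R=Q_Q$, whence $\edim(R)\le n$; and since $Q_QS=P_P$, Proposition~\ref{n:1} applied with $I=Q_Q$ yields $n=\mu_S^{\varphi}(Q_Q)\le\mu_R(Q_Q)=\edim(R)$ --- though you could even skip Proposition~\ref{n:1} here, since a minimal generating set of $Q_Q$ maps onto a generating set of $Q_QS=P_P$, giving $n\le\edim(R)$ directly. Your attention to descending Noetherianness to $R$ before invoking Proposition~\ref{n:1} is justified and correctly argued (the paper needs the same fact, implicitly, for $F\otimes_kA$ in Lemma~\ref{s:1.1}). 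As for what each approach buys: the paper's route costs nothing extra, since Lemma~\ref{s:1.1} is needed anyway in Step 1 of Theorem~\ref{s:1}, and its global identity $P=Q'(K\otimes_kA)$ (equation (\ref{s:1.2eq1})) is cited again in Step 3 of that proof, whereas your argument only establishes the local version $Q_QS=P_P$; your route, on the other hand, is more elementary and self-contained, and it isolates the clean general principle underlying the lemma, namely that $\edim$ is invariant under any flat local homomorphism of Noetherian local rings whose closed fibre is a field, i.e., for which the maximal ideal of the source extends to that of the target.
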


%%%%%%%%%%%%%%%%%%%%%%%%%%%%%%%%
%%%%%%%%%%%%%%%%%%%%%%%%%%%%%%%%
\begin{proof}
Let  $z_1,...,z_t\in K\otimes_kA$ such that
$P=\left(z_1,...,z_t\right)K\otimes_kA$; and for each $i=1,...,t$,
let $z_i:=\sum\limits_{j=1}^{n_i}\alpha_{ij}\otimes_ka_j$ with
$\alpha_{ij}\in K$ and  $a_j\in A$. Let
$E:=k\left(\left\{\alpha_{ij} \mid i=1,...,t\ ;\
j=1,...,n_i\right\}\right)$ and $Q:=P\cap(E\otimes_kA)$. Clearly,
$z_1,...,z_t\in Q$ and hence $P=Q(K\otimes_kA)=K\otimes_EQ.$ Apply
Lemma~\ref{s:1.1} to $K\otimes_kA\cong K\otimes_E(E\otimes_kA)$ to
obtain $\edim(K\otimes_kA)_P=\edim(E\otimes_kA)_Q.$ Now, let $F$ be
an intermediate field between $E$ and $K$ and $Q^{\prime}:=P\cap
(F\otimes_kA)$. Then
\begin{equation}\label{s:1.2eq1}P=Q^{\prime}(K\otimes_kA)=K\otimes_EQ^{\prime}\end{equation}
since $Q^{\prime}\cap(E\otimes_kA)=Q$. As above, Lemma~\ref{s:1.1}
leads to the conclusion.
\end{proof}

Next, we give the proof of the main theorem.
\bigskip

%%%%%%%%%%%%%%%%%%%%%%%%%%%%%%%%%%%%%%%%%%%%%%%%%%%%%%%%%%%%%%%%%%%%%%%%%%%%%%%%%%%%%%%%%%%%%%%%%%%%%%%%%%%%%%%%%%
\noindent\emph{Proof of Theorem~\ref{s:1}}.
We proceed through three steps.

\noindent{\bf Step 1.} Assume that $K$ is an algebraic separable
extension field of $k$. We claim that
\begin{equation}\label{s:1eq1} P_P=\left(K\otimes_kp\right)_P.\end{equation}
Indeed,  set $S_p:= {\dfrac Ap}\setminus \{\overline 0\}$. The basic
fact $\dfrac P{K\otimes_kp}\cap\dfrac Ap=(\overline 0)$ yields
$$\frac{(K\otimes_kA)_P}{(K\otimes_kp)_P}\cong\left(K\otimes_k\frac Ap\right)_{\frac P{K\otimes_kp}}=
\Big(K\otimes_k\kappa_{A}(p)\Big)_{S_p^{-1}(\frac P{K\otimes_kp})}$$
where $K\otimes_k\kappa_{A}(p)$ is a zero-dimensional ring
\cite[Theorem 3.1]{S2}, reduced \cite[Chap. III, \S 15, Theorem
39]{ZS}, and hence von Neumann regular \cite[Ex. 22, p. 64]{K}. It
follows that $\Big(K\otimes_k\kappa_{A}(p)\Big)_{S_p^{-1}(\frac
P{K\otimes_kp})}$ is a field. Consequently, $(K\otimes_kp)_P=P_P$,
the unique maximal ideal of $(K\otimes_kA)_P$, proving our claim. By
(\ref{s:1eq1}) and Lemma~\ref{s:1.1}, we get
$\edim(K\otimes_kA)_P=\edim(A_p).$

\noindent{\bf Step 2.} Assume that $K$ is a finitely generated
separable extension field of $k$. Let $T=\{x_1,...,x_t\}$ be a
finite separating transcendence base of $K$ over $k$; that is, $K$
is algebraic separable over $k(T):=k(x_1,...,x_t)$. Let
$S:=k[T]\setminus\{0\}$ and notice that
$$K\otimes_kA\cong K\otimes_{k(T)}(k(T)\otimes_kA)\cong K\otimes_{k(T)}S^{-1}A[T].$$
Let $P\cap S^{-1}A[T]=S^{-1}P^{\prime}$ for some prime ideal
$P^{\prime}$ of $A[T]$. Note that $P^{\prime}\cap A=p$. Then, we
have
$$
\begin{array}{lll}
\edim(K\otimes_kA)_P    &=&\edim\left(K\otimes_{k(T)}S^{-1}A[T]\right)_P\\
                        &=&\edim\left(S^{-1}A[T]_{S^{-1}P^{\prime}}\right), \mbox { by Step 1}\\
                        &=&\edim\left(A[T]_{P^{\prime}}\right)\\
                        &=&\edim(A_p)+\htt\left(\dfrac{P^{\prime}}{p[T]}\right), \mbox { by Theorem~\ref{p:1}.}
\end{array}
$$
Moreover, note that
$$
\begin{array}{rcl}
K\otimes_k\dfrac Ap  &\cong  & K\otimes_{k(T)}\left(k(T)\otimes_k\dfrac Ap\right)\\
                    &\cong  & K\otimes_{k(T)}\dfrac {S^{-1}A[T]}{S^{-1}p[T]}
\end{array}
$$
 and
 $${\frac P{K\otimes_kp}\cap \frac {S^{-1}A[T]}{S^{-1}p[T]}=\frac {S^{-1}P^{\prime}}{S^{-1}p[T]}}$$
as $K\otimes_kp\cong
K\otimes_{k(T)}S^{-1}p[T]$ so that $(K\otimes_kp)\cap
S^{-1}A[T]=S^{-1}p[T]$. Therefore the integral extension
$\dfrac{S^{-1}A[T]}{S^{-1}p[T]}\hookrightarrow K\otimes_k{\dfrac
Ap}$ is flat and hence satisfies the Going-down property; that is, $
\htt\left(\dfrac{P^{\prime}}{p[T]}\right)    =
\htt\left(\dfrac{S^{-1}P^{\prime}}{S^{-1}p[T]}\right)=
\htt\left(\dfrac P{K\otimes_kp}\right). $ It follows that
$\edim(K\otimes_kA)_P    =\edim(A_p)\ +\ \htt{\left(\dfrac
P{K\otimes_kp}\right)}.$

\noindent{\bf Step 3.} Assume that $K$ is an arbitrary separable
extension field of $k$.  Then, by Lemma~\ref{s:1.2}, there exists a
finite extension field $E$ of $k$ contained in $K$ such that
$$\edim(K\otimes_kA)_{P}=\edim(E\otimes_kA)_{Q}$$
where $Q:=P\cap (E\otimes_kA)$. Let $\Omega$ denote the set of all
intermediate fields between $E$ and $K$. For each $F\in \Omega$,
note that $P=Q^{\prime}(K\otimes_kA)$, where
$Q^{\prime}:=P\cap(F\otimes_kA)$, as seen in (\ref{s:1.2eq1}); and
by Lemma~\ref{s:1.2} and Step 2, we obtain
\begin{equation}\label{s:1eq2}
\edim(K\otimes_kA)_P   = \edim(F\otimes_kA)_{Q^{\prime}}=
\edim(A_p)+\htt\left({\dfrac {Q^{\prime}}{F\otimes_kp}}\right).
\end{equation}
Further, as the ring extension ${F\otimes_k\dfrac Ap\hookrightarrow
K\otimes_k\dfrac Ap}$ satisfies the Going-down property, we get
\begin{equation}\label{s:1eq3}\htt\left(\frac{Q^{\prime}}{F\otimes_kp}\right)\leq\htt\left(\frac{P}{K\otimes_kp}\right).\end{equation}
Next let $K\otimes_kp\subseteq P_0\subsetneq
P_1\subsetneq...\subsetneq P_n=P$ be a chain of distinct prime
ideals of $K\otimes_kA$ such that $n:=\htt\left({\dfrac
P{K\otimes_kp}}\right)$. Let $t_i\in P_i\setminus P_{i-1}$ for each
$i=1,...,n$. One readily checks that there exists a finite extension
field $G$ of $k$ contained in $K$ such that, for each $i=1,...,n$,
$t_i\in G\otimes_kA$ and thus $t_i\in Q^{\prime}_i\setminus
Q^{\prime}_{i-1}$, where $Q^{\prime}_i:=P_i\cap (G\otimes_kA)$. Let
$H:=k(E,G)\in \Omega$ and $Q_i:=P_i\cap (H\otimes_kA)$ for each
$i=1,...,n$. Then $t_i\in Q_i\setminus Q_{i-1}$ for each
$i=1,...,n$. Therefore, we get the following chain of distinct prime
ideals in $H\otimes_kA$
$$H\otimes_kp\subseteq Q_0\subsetneq Q_1\subsetneq...\subsetneq Q_n=Q^{\prime}:=P\cap(H\otimes_kA).$$
It follows that $\htt\left(\frac{Q^{\prime}}{H\otimes_kp}\right)\geq
n$ and then (\ref{s:1eq3}) yields
$\htt\left(\frac{Q^{\prime}}{F\otimes_kp}\right)=\htt\left(\frac{P}{K\otimes_kp}\right).$
Further, $K\otimes_k\kappa_A(p)$ is regular since $K$ is separable
over K \cite[Lemma 6.7.4.1]{Gr}. Consequently, by (\ref{s:1eq2}), we
get
$$
\begin{array}{lll}
\edim(K\otimes_kA)_P    &=&\edim(A_p)\ +\ \htt{\left(\dfrac P{K\otimes_kp}\right)}\\
                        &=&\edim(A_p)\ +\  \edim\left(\Big(K\otimes_k\kappa_A(p)\Big)_{\frac{P_p}{K\otimes_kpA_p}}\right)
\end{array}
$$
completing the proof of the theorem.
\bigskip

As a direct application of Theorem~\ref{s:1}, we obtain the next
corollary on the (embedding) codimension which extends
Grothendieck's result on the transfer of regularity to tensor
products issued from finite extension fields \cite[Lemma
6.7.4.1]{Gr}. See also \cite{BK1}.

%%%%%%%%%%%%%%%%%%%%%%%%%%%%%%%%%%%%%%%%%%%%%%%%%%%%%%%%%%%%%%%%%%
%%%%%%%%%%%%%%%%%%%%%%%%%%%%%%%%%%%%%%%%%%%%%%%%%%%%%%%%%%%%%%%%%%
\begin{corollary}\label{s:2}
Let $K$ be a separable extension field of $k$ and $A$ a $k$-algebra
such that $K\otimes_kA$ is Noetherian. Let $P$ be a prime ideal of
$K\otimes_kA$ with $p:=P\cap A$. Then:
$$\cdim(K\otimes_kA)_{P}=\cdim(A_p).$$ In particular, $K\otimes_kA$ is regular if and only if $A$ is regular.
\end{corollary}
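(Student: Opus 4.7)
The plan is to combine Theorem~\ref{s:1} (the embedding-dimension formula) with the special chain theorem for Krull dimension recorded in (\ref{scte}), subtracting one from the other on the fibre of the flat map $A_p\hookrightarrow (K\otimes_kA)_P$. Write $R:=\bigl(K\otimes_k\kappa_A(p)\bigr)_{P_p/K\otimes_kpA_p}$ for the relevant fibre ring localization. From Theorem~\ref{s:1},
\[
\edim(K\otimes_kA)_P=\edim(A_p)+\edim(R),
\]
and from (\ref{scte}) applied with $B:=K$ (noting $q=(0)$ so $\kappa_A(p)\otimes_kK$ replaces the mixed fibre),
\[
\dim(K\otimes_kA)_P=\dim(A_p)+\dim(R).
\]

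The key input is that $R$ is regular. This is because $K\otimes_k\kappa_A(p)$ is regular by Grothendieck \cite[Lemma~6.7.4.1]{Gr} (since $K$ is separable over $k$, and $\kappa_A(p)$ is a $k$-algebra), and regularity passes to localizations. Consequently $\edim(R)=\dim(R)$. Subtracting the two displayed equalities then gives
\[
\cdim(K\otimes_kA)_P=\bigl(\edim(A_p)+\edim(R)\bigr)-\bigl(\dim(A_p)+\dim(R)\bigr)=\cdim(A_p),
\]
which is the first claim.

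For the ``in particular'' statement, observe that $K\otimes_kA$ is regular iff every localization $(K\otimes_kA)_P$ at a prime $P$ is regular, iff $\cdim(K\otimes_kA)_P=0$ for all $P\in\Spec(K\otimes_kA)$. By the first part this is equivalent to $\cdim(A_p)=0$ for every $p$ of the form $P\cap A$. Since $K$ is a free $k$-module, the canonical map $A\to K\otimes_kA$ is faithfully flat, so the contraction $\Spec(K\otimes_kA)\to\Spec(A)$ is surjective; hence every $p\in\Spec(A)$ arises as $P\cap A$ for some $P$. Thus the condition is equivalent to $A$ being regular.

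The only delicate step is the invocation of Grothendieck's result that $K\otimes_k\kappa_A(p)$ is regular; this is precisely where the separability of $K/k$ is used and it is the main conceptual obstacle (everything else is just a formal subtraction of the dimension and embedding-dimension formulas). All other ingredients, namely faithful flatness of $A\to K\otimes_kA$ and the passage of regularity to localizations, are standard.
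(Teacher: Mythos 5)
Your proposal is correct and takes essentially the same route as the paper, whose proof is literally ``Combine Theorem~\ref{s:1} and (\ref{scte})'' and subtract. The only cosmetic difference is that you subtract the fibre-ring forms and re-invoke Grothendieck's regularity of $K\otimes_k\kappa_A(p)$ to get $\edim(R)=\dim(R)$, whereas subtracting the height forms cancels the common term $\htt\left(\dfrac{P}{K\otimes_kp}\right)$ outright (that regularity is already built into the second equality of Theorem~\ref{s:1}); your faithful-flatness argument for the ``in particular'' clause is the standard step the paper leaves implicit.
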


%%%%%%%%%%%%%%%%%%%%%%%%%%%%%%%%
%%%%%%%%%%%%%%%%%%%%%%%%%%%%%%%%
\begin{proof}
Combine Theorem~\ref{s:1} and (\ref{scte}).
\end{proof}

%%%%%%%%%%%%%%%%%%%%%%%%%%%%%%%%%%%%%%%%%%%%%%%%%%%%%%%%%%%%%%%%%%%%%%%%%%%%%%%%%%%%%%%%%%%%
%%%%%%%%%%%%%%%%%%%%%%%%%%%%%%%%%%%%%%%%%%%%%%%%%%%%%%%%%%%%%%%%%%%%%%%%%%%%%%%%%%%%%%%%%%%%
%%%%%%%%%%%%%%%%%%%%%%%%%%%%%%%%%%%%%%%%%%%%%%%%%%%%%%%%%%%%%%%%%%%%%%%%%%%%%%%%%%%%%%%%%%%%
%%%%%%%%%%%%%%%%%%%%%%%%%%%%%%%%%%%%%%%%%%%%%%%%%%%%%%%%%%%%%%%%%%%%%%%%%%%%%%%%%%%%%%%%%%%%
\section{Embedding dimension and codimension of tensor products of algebras with separable residue fields}\label{r}

\noindent This section deals with a more general setting (than in
Section \ref{s}); namely, we compute the embedding dimension of
localizations of the tensor product of two $k$-algebras $A$ and $B$
at prime ideals $P$ such that the residue field $\kappa_{B}(P\cap
B)$ is a separable extension of $k$.  The main result establishes an
analogue for an extended form of the ``special chain theorem" for
the Krull dimension which asserts that
\begin{equation}\label{r:eq1}
\begin{array}{rl}
\dim(A\otimes_kB)_{P}   &= \dim(A_{p})+\dim(B_{q})+\htt\left(\dfrac{P}{p\otimes_kB+A\otimes_kq}\right)\\
                        &= \dim(A_{p})+\dim(B_{q})+\dim\left(\big(\kappa_A(p)\otimes_k\kappa_B(q)\big)_{\frac{P(A_p\otimes_kB_q)}{pA_p\otimes_kB_{q}+
                        A_p\otimes_kqB_{q}}}\right).
\end{array}
\end{equation}
As an application, we formulate the (embedding) codimension of these
constructions with direct consequence on the transfer or defect of
regularity.

Here is the main result of this section.

%%%%%%%%%%%%%%%%%%%%%%%%%%%%%%%%%%%%%%%%%%%%%%%%%%%%%%%%%%%%%%%%%%
%%%%%%%%%%%%%%%%%%%%%%%%%%%%%%%%%%%%%%%%%%%%%%%%%%%%%%%%%%%%%%%%%%
\begin{thm}\label{r:1}
Let $A$ and $B$ be two $k$-algebras such that $A\otimes_kB$ is
Noetherian and let $P$ be a prime ideal of $A\otimes_kB$ with
$p:=P\cap A$ and $q:=P\cap B$. Assume $\kappa_B(q)$ is separable
over $k$. Then:
$$
\begin{array}{rl}
\edim(A\otimes_kB)_{P}\ =   &\edim(A_{p})\ +\ \edim(B_{q})\ +\ \htt{\left(\dfrac P{p\otimes_kB + A\otimes_kq}\right)}\\
                      \ =   &\edim(A_{p})\ +\ \edim(B_{q})\\
                            &\hspace{2.1cm}+\ \edim\left(\Big(\kappa_A(p)\otimes_k\kappa_B(q)\Big)_{\frac{P(A_p\otimes_kB_q)}{pA_p\otimes_kB_{q}+A_p\otimes_kqB_{q}}}\right)
\end{array}
$$
\end{thm}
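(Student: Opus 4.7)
My plan is to derive Theorem~\ref{r:1} by combining Proposition~\ref{f:1}, Theorem~\ref{s:1}, and a crucial step showing $\mu_P(qB_q) = \edim(B_q)$. First, the symmetric form of Proposition~\ref{f:1}(a) (obtained by interchanging $A$ and $B$ via $A\otimes_kB \cong B\otimes_kA$) decomposes
$$\edim((A\otimes_kB)_P) = \mu_P(qB_q) + \edim\bigl((A\otimes_k\kappa_B(q))_{\widetilde{P}}\bigr),$$
where $\widetilde{P}$ is the image of $P_q$ in the quotient $A\otimes_k\kappa_B(q)$. Since $\kappa_B(q)$ is separable over $k$, Theorem~\ref{s:1} applied to $A\otimes_k\kappa_B(q) \cong \kappa_B(q)\otimes_kA$ (with $K=\kappa_B(q)$) yields
$$\edim\bigl((A\otimes_k\kappa_B(q))_{\widetilde{P}}\bigr) = \edim(A_p) + \edim(F),$$
where $F$ denotes the localization of $\kappa_A(p)\otimes_k\kappa_B(q)$ at the appropriate prime. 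Substitution gives
$$\edim((A\otimes_kB)_P) = \mu_P(qB_q) + \edim(A_p) + \edim(F). \qquad (\star)$$

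The crux is to establish $\mu_P(qB_q) = \edim(B_q)$. The key observation is that $F$ is regular: since $\kappa_B(q)/k$ is separable, Grothendieck's lemma \cite[Lemma 6.7.4.1]{Gr} ensures that $\kappa_A(p)\otimes_k\kappa_B(q)$ is a regular ring, hence so is its localization $F$. Now consider the flat local homomorphism
$$\varphi : B_q \longrightarrow C := (\kappa_A(p)\otimes_kB)_{\widetilde{P}'},$$
where $\widetilde{P}'$ is the image of $P_p$ in $\kappa_A(p)\otimes_kB$; its fibre $C/qB_q\cdot C$ is canonically isomorphic to $F$. Invoking the classical fact that a flat local homomorphism of Noetherian local rings with regular fibre has additive embedding dimension, I deduce $\edim(C) = \edim(B_q) + \edim(F)$. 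Applying Proposition~\ref{f:1}(a) (in swapped form) to $C$ gives $\edim(C) = \mu_{\widetilde{P}'}(qB_q) + \edim(F)$, whence $\mu_{\widetilde{P}'}(qB_q) = \edim(B_q)$. By the monotonicity of $\mu$ under the canonical quotient $(A\otimes_kB)_P \twoheadrightarrow C$ (noted at the end of Section~2),
$$\edim(B_q) = \mu_{\widetilde{P}'}(qB_q) \leq \mu_P(qB_q) \leq \edim(B_q),$$
which forces $\mu_P(qB_q) = \edim(B_q)$.

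Substituting into $(\star)$ yields the second (embedding-dimension) equality of the theorem. The first (height) equality follows from the regularity of $F$, which gives $\edim(F) = \dim(F)$, coinciding with $\htt(P/(p\otimes_kB + A\otimes_kq))$ via the standard localization-quotient identifications already used in~\eqref{r:eq1}. The main obstacle is the invocation of the classical fact about flat maps with regular fibres in the crux step; while standard (its proof hinges on the injectivity of $\mathfrak{m}_R/\mathfrak{m}_R^2 \otimes_{k_R} k_S \to \mathfrak{m}_S/\mathfrak{m}_S^2$ whenever the fibre $S/\mathfrak{m}_R S$ is regular), it may need to be established as a preparatory lemma within the paper's framework.
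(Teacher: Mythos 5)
Your proposal is correct, and it takes a genuinely different route from the paper's. Both arguments share the outer shell: the swapped form of Proposition~\ref{f:1}(a) together with Theorem~\ref{s:1} reduces the theorem to the single identity $\mu_P(qB_q)=\edim(B_q)$, and the height formulation then follows from regularity of $\kappa_A(p)\otimes_k\kappa_B(q)$ exactly as in the paper's opening paragraph; the divergence is in how that identity is established. The paper stays inside its own toolkit: after reducing to local $A$ and $B$, it lifts a separating transcendence base of a suitable finite subextension of $\kappa_B(q)$ to algebraically independent elements $c_1,\dots,c_t\in B$ with $\mathfrak{m}\cap k[c_1,\dots,c_t]=(0)$, rewrites $A\otimes_kS^{-1}B\cong S^{-1}A[t]\otimes_{k(t)}S^{-1}B$, and proves by a lengthy diagram chase (its ``Claim'') that the relevant localized prime is extended from the two sides, so that Lemma~\ref{s:1.1} delivers the needed value of $\mu$. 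You instead apply additivity of embedding dimension along the flat local map $B_q\to C:=(\kappa_A(p)\otimes_kB)_{\widetilde{P}'}$, whose closed fibre $F$ is regular precisely because $\kappa_B(q)$ is separable over $k$, and then squeeze $\edim(B_q)=\mu_{\widetilde{P}'}(qB_q)\le\mu_P(qB_q)\le\edim(B_q)$ via the monotonicity of $\mu$ noted in Section~2; this squeeze is exactly right. The lemma you flag is indeed true for arbitrary residue field extensions and is provable inside the paper's framework: lift a regular system of parameters $\bar y_1,\dots,\bar y_e$ of $S/\mathfrak{m}_RS$ to $y_1,\dots,y_e\in\mathfrak{m}_S$; by the local flatness criterion \cite[\S 22]{M}, $S':=S/(y_1,\dots,y_e)S$ is $R$-flat with fibre the residue field $k_S$, so $\mathfrak{m}_{S'}=\mathfrak{m}_RS'$ and flatness gives $\mathfrak{m}_{S'}/\mathfrak{m}_{S'}^2\cong(\mathfrak{m}_R/\mathfrak{m}_R^2)\otimes_{k_R}k_S$, whence $\edim(S')=\edim(R)$; since the classes of the $y_i$ remain linearly independent in $\mathfrak{m}_S/\mathfrak{m}_S^2$ (their images in the fibre are a regular system of parameters), Proposition~\ref{n:1} yields $\edim(S)=e+\edim(R)$, which is your additivity. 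Comparing the two: your route is much shorter, isolates a reusable flat-fibre lemma (close in spirit to the Andr\'e--Quillen approach of \cite{BCM}), and makes transparent that separability enters only through regularity of $F$; the paper's route, at the price of an intricate computation, remains self-contained relative to Lemma~\ref{s:1.1} and its polynomial-ring techniques, in keeping with its stated aim of direct proofs from basic commutative ring theory. When writing yours up, do prove the flagged lemma along the lines above, and make explicit the easy point that $A\otimes_k\kappa_B(q)$ and $\kappa_A(p)\otimes_kB$ are Noetherian (localizations of quotients of $A\otimes_kB$), so that Theorem~\ref{s:1} and Proposition~\ref{f:1} legitimately apply to them.
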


%%%%%%%%%%%%%%%%%%%%%%%%%%%%%%%%
%%%%%%%%%%%%%%%%%%%%%%%%%%%%%%%%
\begin{proof}
Notice first that, as  $\kappa_B(q)$ is separable over $k$,
$\kappa_A(p)\otimes_k\kappa_B(q)$ is a regular ring and hence
$$
\begin{array}{rl}
\edim\left(\Big(\kappa_A(p)\otimes_k\kappa_B(q)\Big)_{\frac{P(A_p\otimes_kB_q)}{pA_p\otimes_kB_{q}+A_p\otimes_kqB_{q}}}\right)
                        &=\ \htt\left(\dfrac{P(A_p\otimes_kB_q)}{pA_p\otimes_kB_{q}+A_p\otimes_kqB_{q}}\right)\\
                        &=\ \htt\left(\dfrac P{p\otimes_kB + A\otimes_kq}\right).
\end{array}
$$
So, we only need to prove the first equality in the theorem and,
without loss of generality, we may assume that $(A,\n)$ and
$(B,{\m})$ are local $k$-algebras such that $A\otimes_kB$ is
Noetherian, $\dfrac{B}{\m}$ is a separable extension field of $k$,
and $P$ is a prime ideal of $A\otimes_kB$ with $P\cap A=\n$ and
$P\cap B={\m}$. Similar arguments used in the proof of
Lemma~\ref{s:1.2} show that there exists a finite extension field
$K$ of $k$ contained in $\dfrac{B}{\m}$ such that
$$\frac{P}{A\otimes_k{\m}}=Q\left(A\otimes_k\frac{B}{\m}\right)\cong Q\otimes_K\frac{B}{\m}$$
where $Q:=\dfrac{P}{A\otimes_k{\m}}\cap(A\otimes_kK)$. Since
${\dfrac B{{\m}}}$ is separable over $k$ and $K$ is a finitely
generated intermediate field, then $K$ is separably generated over
$k$ (cf. \cite[Chap. VI, Theorem 2.10 \& Definition 2.11]{H}). Let
$t$ denote the transcendence degree of $K$ over $k$ and let
$c_1,...,c_t\in B$ such that $\{\overline{c_1},...,\overline
{c_t}\}$ is a separating transcendence base of $K$ over $k$; i.e.,
$K$ is separable algebraic over $\Omega:=k\left(\overline{c_1},
...,\overline {c_t}\right)$. Also $c_1,...,c_t$ are algebraically
independent over $k$ with
\begin{equation}\label{r:1eq1}{\m}\cap k[c_1, ..., c_t]=(0).\end{equation}
So one may view $A\otimes_kk[c_1,...,c_t]\cong A[c_1,...,c_t]$ as a
polynomial ring in $t$ indeterminates over $A$. Set
$S:=k[c_1,...,c_t]\setminus \{0\}\ ;\ k(t):=k(c_1,...,c_t)\ ;\
A[t]:=A[c_1,...,c_t].$ Then, we have
\begin{equation}\label{r:1eq2}P\cap S=\m\cap S=\emptyset\ \text{ and }\ A\otimes_kS^{-1}B\cong S^{-1}A[t]\otimes_{k(t)}S^{-1}B.\end{equation}
Next, let $T:= {\frac P{A\otimes_k{\m}}}\cap(A\otimes_k\Omega)=Q\cap
(A\otimes_k\Omega)$ and consider the following canonical
isomorphisms of $k$-algebras $ \theta_1:A\otimes_k {\dfrac
{S^{-1}B}{S^{-1}{\m}}}\longrightarrow\big(A\otimes_kk(t)\big)\otimes_{k(t)}
{\dfrac{S^{-1}B}{S^{-1}{\m}}}$ and $\theta_2:A\otimes_k {\dfrac
B{{\m}}}\longrightarrow(A\otimes_k\Omega)\otimes_{\Omega} {\dfrac
B{{\m}}.}$ As $A\otimes_kK\cong(A\otimes_k\Omega)\otimes_{\Omega}K$,
by (\ref{s:1eq1}) we obtain
$Q_Q=(T\otimes_{\Omega}K)_Q=T(A\otimes_kK)_Q $ and hence
\begin{equation}\label{r:1eq3}
\begin{array}{lll}
\left(\dfrac P{A\otimes_k{\m}}\right)_{\frac P{A\otimes_k{\m}}} &=&
Q\left(A\otimes_k\dfrac B{{\m}}\right)_{\frac
P{A\otimes_k{\m}}}=Q_Q\left(A\otimes_k \dfrac
B{{\m}}\right)_{\left(\frac P{A\otimes_k{\m}}\right)_Q}\\
&=&T(A\otimes_{k}K)_Q\left(A\otimes_k \dfrac
B{{\m}}\right)_{\left(\frac P{A\otimes_k{\m}}\right)_Q}\\
&=&T(A\otimes_{k}K)\left(A\otimes_k \dfrac B{{\m}}\right)_{\frac
P{A\otimes_k{\m}}}=T\left(A\otimes_k \dfrac B{{\m}}\right)_{\frac
P{A\otimes_k{\m}}}\\
&=&\left(\theta_2^{-1}\left(\theta_2\left(T\left(A\otimes_k \dfrac
B{{\m}}\right)\right)\right)\right)_{\frac
P{A\otimes_k{\m}}}=\left(\theta_2^{-1} \left(T\otimes_{\Omega}\dfrac
B{{\m}}\right)\right)_{\frac P{A\otimes_k{\m}}}.
\end{array}
\end{equation}
Moreover, on account of (\ref{r:1eq1}) and by considering the
natural surjective homomorphism of $k$-algebras
$k[c_1,...,c_t]\stackrel {\varphi}\longrightarrow
k[\overline{c_1},...,\overline {c_t}]$ defined by
$\varphi(c_i)=\overline {c_i}$ for each $i=1,...,t$, we get
$k[c_1,...,c_t]\stackrel \varphi \cong k[\overline
{c_1},...,\overline {c_t}]$ inducing the following natural
isomorphism of extension fields
$\phi:=S^{-1}\varphi:k(t)\longrightarrow k(\overline
{c_1},...,\overline {c_t})=\Omega.$
%defined by $\phi\Big( {\frac fs\Big )=\frac {\varphi(f)}{\varphi(s)}=\frac{\overline f}{\overline s}}$ for each $f\in k[c_1,...,c_t]$ and each $s\in S$.
Then, $\phi$ induces a structure of $k(t)$-algebras on $\Omega$ and
thus on $ {\dfrac B{{\m}}}$. We adopt a second structure of
$k(t)$-algebras on ${\dfrac B{{\m}}}$, inherited from the canonical
injection $k(t)\stackrel i\hookrightarrow S^{-1}B$. Indeed, consider
the following $k$-algebra homomorphisms $k(t)\stackrel {\overline
i}\longrightarrow{\frac
{S^{-1}B}{S^{-1}{\m}}}\stackrel{\gamma}\longrightarrow{\frac
B{{\m}}}$ defined by $\overline {i}(\alpha)=\overline {\alpha}$ for
each $\alpha\in k(t)$, and where $\gamma$ is the isomorphism of
$k$-algebras defined by $\gamma\Big({\overline{\frac
bs}\Big)=\frac{\overline {b}}{\overline s}}$ for each $b\in B$ and
each $s\in S$. It is easy to see that these two structures of
$k(t)$-algebras coincide on $ {\dfrac B{{\m}}}$. This is due to the
commutativity of the following diagram of homomorphisms of
$k$-algebras
$$\begin{array}{ccc}k(t)&\stackrel {\overline
i}\longrightarrow& {\dfrac {S^{-1}B}{S^{-1}{\m}}}\\
&&\\
&\phi\searrow&\downarrow\gamma\\
&& \\
&& {\dfrac B{{\m}}}
\end{array}$$
since, for each $\alpha:= {\frac {f}{s}}\in k(t)$ with $f\in
k[c_1,...,c_t]$ and $s\in S$, we have
$$(\gamma\circ\overline i)(\alpha)=\gamma(\overline {\alpha})=
\frac{\overline {f}}{\overline
{s}}=\dfrac{\varphi(f)}{\varphi(s)}=\phi(\alpha).$$ Now, consider
the following isomorphism of $k$-algebras
$$\psi:=\theta_2\circ(1_A\otimes_k\gamma)\circ\theta_1^{-1}:\big(A\otimes_kk(t)\big)\otimes_{k(t)} {\frac{S^{-1}B}{S^{-1}{\m}}\longrightarrow (A\otimes_k\Omega)\otimes_{\Omega}\frac B{{\m}}}$$
where, for each
$a\in A$, $\alpha\in k(t)$, $b\in B$, and $s\in S$, we have
$$\begin{array}{lll}
\psi\left((a\otimes_k\alpha)\otimes_{k(t)}\overline { {\frac
bs}}\right) &=&\theta_2\left((1_A\otimes_k\gamma)\Big
(a\otimes_{k}\overline{\alpha}\overline { {\frac bs}}\Big )\right)=\theta_2\left(a\otimes_k\gamma \Big (\overline{\alpha}\overline{ {\frac bs}}\Big )\right)\\
&=&\theta_2\left(a\otimes_k\Big((\gamma\circ \overline i)(\alpha)\gamma \big (\overline{ {\frac bs}}\big )\Big)\right)=
\theta_2\Big (a\otimes_k\Big (\phi(\alpha)\gamma \Big(\overline{ {\frac bs}}\Big )\Big )\Big )\\
&=&(a\otimes_k\overline 1)\otimes_{\Omega}\phi(\alpha)\gamma
\Big(\overline{ {\frac bs}}\Big )
=\left(a\otimes_k\phi(\alpha)\right)\otimes_{\Omega}\gamma \Big (\overline{ {\frac bs}}\Big )\\
&=&(1_A\otimes_k\phi)(a\otimes_k\alpha)\otimes_{\Omega}\gamma\Big
(\overline { {\frac bs}}\Big ).
\end{array}$$
Next, let $\delta:A\otimes_kS^{-1}B\longrightarrow
S^{-1}A[t]\otimes_{k(t)}S^{-1}B$ denote the canonical isomorphism of
$k$-algebras mentioned in (\ref{r:1eq2}) and let
$S^{-1}H:=S^{-1}P\cap S^{-1}A[t]$ where $H$ is a prime ideal of
$A[t]$ with $H\cap S=\emptyset$. Therefore
\begin{equation}\label{r:1eq4}
\begin{array}{lll}
\psi\left(S^{-1}H\otimes_{k(t)} {\dfrac {S^{-1}B}{S^{-1}{\m}}}\right)
&=&(1_A\otimes_k\phi)(S^{-1}H)\otimes_{\Omega}\gamma\Big ( {\dfrac {S^{-1}B}{S^{-1}{\m}}}\Big )\\
&=& (1_A\otimes_k\phi)(S^{-1}H)\otimes_{\Omega}  {\dfrac B{{\m}}}.
\end{array}
\end{equation}
{\bf Claim:}
$\delta(S^{-1}P)_{\delta(S^{-1}P)}=\Big(S^{-1}H\otimes_{k(t)}S^{-1}B+S^{-1}A[t]\otimes_{k(t)}S^{-1}{\m}\Big)_{\delta
(S^{-1}P)}$.

\noindent Indeed, consider the following commutative diagram (as
$\phi=\gamma\circ\overline i$)
$$\begin{array}{ccc}
S^{-1}(A\otimes_kB)=A\otimes_kS^{-1}B&\stackrel {1_A\otimes_k(\gamma\circ \pi)}\longrightarrow&A\otimes_k{\dfrac B{{\m}}}\\
&&\\
1_A\otimes_ki\uparrow&&\uparrow\\
&&\\
A\otimes_kk(t)&\stackrel
{1_A\otimes_k\phi}\longrightarrow&A\otimes_k\Omega
\end{array}$$
where $\pi:S^{-1}B\longrightarrow  {\frac {S^{-1}B}{S^{-1}{\m}}}$
denotes the canonical surjection (with $\pi\circ i=\overline i$) and
the vertical maps are the canonical injections. Also, it is worth
noting that $1_A\otimes_k\phi$ is an isomorphism of $k$-algebras.
Hence
\begin{equation}\label{r:1eq5}
\begin{array}{lll}
T&=& {\dfrac P{A\otimes_k{\m}}}\cap (A\otimes_k\Omega)=
(1_A\otimes_k\phi)\left(\left(\Big (1_A\otimes_k(\gamma\circ \pi)\Big)^{-1}\Big(\dfrac P{A\otimes_k{\m}}\Big)\right)\cap\big(A\otimes_kk(t)\big)\right)\\
&=&(1_A\otimes_k\phi)
\Big(S^{-1}P\cap\big(A\otimes_kk(t)\big)\Big)=(1_A\otimes_k\phi)
(S^{-1}P\cap S^{-1}A[t])\\
&=&(1_A\otimes_k\phi) (S^{-1}H).
\end{array}
\end{equation}
It follows, via  (\ref{r:1eq3}), (\ref{r:1eq5}), and (\ref{r:1eq4}), that
$$\begin{array}{lll}
\left(\dfrac P{A\otimes_k{\m}}\right)_{\frac P{A\otimes_k{\m}}}
&=& \theta_2^{-1}\left(T\otimes_{\Omega}\dfrac B{{\m}}\right)_{\frac P{A\otimes_k{\m}}}\\
&=&\theta_2^{-1}\left((1_A\otimes_k\phi)(S^{-1}H)\otimes_{\Omega} {\dfrac B{{\m}}}\right)_{\frac P{A\otimes_k{\m}}}\\
&=&\theta_2^{-1}\left(\psi\Big (S^{-1}H\otimes_{k(t)}{\dfrac {S^{-1}B}{S^{-1}{\m}}}\Big )\right)_{\frac P{A\otimes_k{\m}}}\\
&=&(1_A\otimes_k\gamma)\left(\theta_1^{-1}\Big(S^{-1}H\otimes_{k(t)} {\dfrac{S^{-1}B}{S^{-1}{\m}}}\Big )\right)_{\frac P{A\otimes_k{\m}}}.
\end{array}$$
Further, notice that $\dfrac
P{A\otimes_k{\m}}=(1_A\otimes_k\gamma)\left(\dfrac{S^{-1}P}{A\otimes_kS^{-1}{\m}}\right).$
Then the isomorphism $1_A\otimes_k\gamma$ yields the canonical
isomorphism of local $k$-algebras
$$(1_A\otimes_k\gamma)_P:\left(A\otimes_k {\frac{S^{-1}B}{S^{-1}{\m}}}\right)_{\frac{S^{-1}P}{A\otimes_kS^{-1}{\m}}}\longrightarrow
\left(A\otimes_k {\frac B{{\m}}}\right)_{\frac
P{A\otimes_k{\m}}}\mbox { with }$$

$$\begin{array}{lll}
(1_A\otimes_k\gamma)_P\left(\left(
{\dfrac{S^{-1}P}{A\otimes_kS^{-1}{\m}}}\right)_{\frac{S^{-1}P}{A\otimes_kS^{-1}{\m}}}\right)
&=& \left(\dfrac P{A\otimes_k{\m}}\right)_{\frac P{A\otimes_k{\m}}}\\
 &=&
(1_A\otimes_k\gamma)_P\left(\theta_1^{-1}\left(S^{-1}H\otimes_{k(t)}
{\dfrac{S^{-1}B}{S^{-1}{\m}}}\right)_{\frac{S^{-1}P}{A\otimes_kS^{-1}{\m}}}\right).
\end{array}$$
Therefore
\begin{equation}\label{r:1eq6}
\theta_1^{-1}\left(S^{-1}H\otimes_{k(t)}
{\frac{S^{-1}B}{S^{-1}{\m}}}\right)_{\frac{S^{-1}P}{A\otimes_kS^{-1}{\m}}}
=\left(
{\frac{S^{-1}P}{A\otimes_kS^{-1}{\m}}}\right)_{\frac{S^{-1}P}{A\otimes_kS^{-1}{\m}}}.
\end{equation}
Moreover, consider the following commutative diagram
$$\begin{array}{ccc}
A\otimes_kS^{-1}B&\stackrel {\delta}\longrightarrow&S^{-1}A[t]\otimes_{k(t)}S^{-1}B\\
\pi_1\downarrow&&\downarrow\pi_2\\
 A\otimes_k {\dfrac{S^{-1}B}{S^{-1}{\m}}}&\stackrel
{\theta_1}\longrightarrow&S^{-1}A[t]\otimes_{k(t)}
{\dfrac{S^{-1}B}{S^{-1}{\m}}}
\end{array}$$
where $\pi_1=1_A\otimes_k\pi$ and $\pi_2=1_{S^{-1}A[t]}\otimes_k\pi$
are the canonical surjective homomorphisms of $k$-algebras. Hence
$$\begin{array}{lll}
\pi_1^{-1}\left(\theta_1^{-1}\Big (S^{-1}H\otimes_{k(t)}
\dfrac{S^{-1}B}{S^{-1}{\m}}\Big )\right)
&=&(\theta_1\circ\pi_1)^{-1}\left(S^{-1}H\otimes_{k(t)} {\dfrac{S^{-1}B}{S^{-1}{\m}}}\right)\\
&=&(\pi_2\circ \delta)^{-1}\left(S^{-1}H\otimes_{k(t)}{\dfrac {S^{-1}B}{S^{-1}{\m}}}\right)\\
&=&\delta^{-1}\left(\pi_2^{-1}\Big(S^{-1}H\otimes_{k(t)} {\dfrac{S^{-1}B}{S^{-1}{\m}}}\Big )\right)\\
&=&\delta^{-1}\left(S^{-1}H\otimes_{k(t)}S^{-1}B+S^{-1}A[t]\otimes_{k(t)}S^{-1}{\m}\right)
\end{array}$$
so that
$$\begin{array}{lll}
\theta_1^{-1}\left(S^{-1}H\otimes_{k(t)}
{\dfrac{S^{-1}B}{S^{-1}{\m}}}\right)
&=&\pi_1\left(\delta^{-1}\Big(S^{-1}H\otimes_{k(t)}S^{-1}B+S^{-1}A[t]\otimes_{k(t)}S^{-1}{\m}\Big)\right)\\
&=& \dfrac
{\delta^{-1}\left(S^{-1}H\otimes_{k(t)}S^{-1}B+S^{-1}A[t]\otimes_{k(t)}S^{-1}{\m}\right)}{A\otimes_kS^{-1}{\m}}.
\end{array}$$
It follows, via (\ref{r:1eq6}), that
$$\begin{array}{lll}
\dfrac {S^{-1}P_{S^{-1}P}}{(A\otimes_kS^{-1}{\m})_{S^{-1}P}}
&=&\left(
{\dfrac{S^{-1}P}{A\otimes_kS^{-1}{\m}}}\right)_{\frac{S^{-1}P}{A\otimes_kS^{-1}{\m}}}
=\theta_1^{-1}\left(S^{-1}H\otimes_{k(t)} {\dfrac{S^{-1}B}{S^{-1}{\m}}}\right)_{\frac{S^{-1}P}{A\otimes_kS^{-1}{\m}}}\\
&=&\left( {\dfrac {\delta^{-1}\Big(S^{-1}H\otimes_{k(t)}S^{-1}B+S^{-1}A[t]\otimes_{k(t)}S^{-1}{\m}\Big)}{A\otimes_kS^{-1}{\m}}}\right)_{\frac{S^{-1}P}{A\otimes_kS^{-1}{\m}}}\\
 &=& {\dfrac
{\delta^{-1}\left(S^{-1}H\otimes_{k(t)}S^{-1}B+S^{-1}A[t]\otimes_{k(t)}S^{-1}{\m}\right)_{S^{-1}P}}{(A\otimes_kS^{-1}{\m})_{S^{-1}P}}}
\end{array}$$
and thus
$S^{-1}P_{S^{-1}P}=\delta^{-1}\Big(S^{-1}H\otimes_{k(t)}S^{-1}B+S^{-1}A[t]\otimes_{k(t)}S^{-1}{\m}\Big)_{S^{-1}P}.$
Also, note that the isomorphism of $k$-algebras $\delta$ induces the
isomorphism of local $k$-algebras\\
$\delta_P:(A\otimes_kS^{-1}B)_{S^{-1}P}\longrightarrow(S^{-1}A[t]\otimes_{k(t)}S^{-1}B)_{\delta(S^{-1}P)}.$
Hence
$$\begin{array}{lll}
\delta_P^{-1}\left(\delta(S^{-1}P)_{\delta(S^{-1}P)}\right)
&=&S^{-1}P_{S^{-1}P}\\
&=&\delta_P^{-1}\left(\Big(S^{-1}H\otimes_{k(t)}S^{-1}B+S^{-1}A[t]\otimes_{k(t)}S^{-1}{\m}\Big)_{\delta
(S^{-1}P)}\right)
\end{array}$$
so that
$\delta(S^{-1}P)_{\delta(S^{-1}P)}=\Big(S^{-1}H\otimes_{k(t)}S^{-1}B+S^{-1}A[t]\otimes_{k(t)}S^{-1}{\m}\Big)_{\delta
(S^{-1}P)}$ proving the claim.

It follows, by Lemma~\ref{s:1.1} applied to
$S^{-1}A[t]\otimes_{k(t)}S^{-1}B$, that
$$\mu_{\delta (S^{-1}P)}(S^{-1}{\m}S^{-1}B_{S^{-1}{\m}})=\edim(S^{-1}B_{S^{-1}{\m}})=\edim(B)$$
so that, by Proposition~\ref{f:1}, we have

$$\begin{array}{lll}
\edim(A\otimes_kB)_P
&=&\edim\left((A\otimes_kS^{-1}B)_{S^{-1}P}\right)
=\edim\left((S^{-1}A[t]\otimes_{k(t)}S^{-1}B)_{\delta(S^{-1}P)}\right)\\
&=&\mu_{\delta
(S^{-1}P)}(S^{-1}{\m}S^{-1}B_{S^{-1}{\m}})+\edim\left(\Big
(S^{-1}A[t]\otimes_{k(t)}
{\dfrac{S^{-1}B}{S^{-1}{\m}}}\Big)_{\frac{\delta(S^{-1}P)}{S^{-1}A[t]\otimes_{k(t)}S^{-1}{\m}}}\right)\\
&=&\edim(B)+\edim\left(\Big(S^{-1}A[t]\otimes_{k(t)}
{\dfrac{S^{-1}B}{S^{-1}{\m}}}\Big
)_{\frac{\delta(S^{-1}P)}{S^{-1}A[t]\otimes_{k(t)}S^{-1}{\m}}}\right)\\
&=&\edim(B)+\edim\left(\Big(A\otimes_k {\dfrac
{S^{-1}B}{S^{-1}{\m}}}\Big)_{\frac
{S^{-1}P}{A\otimes_kS^{-1}{\m}}}\right).
\end{array}$$

Finally, as ${\dfrac {S^{-1}B}{S^{-1}{\m}}\cong \dfrac B{{\m}}}$ is
a separable extension field of $k$, we get, by Theorem~\ref{s:1},
that
$$\begin{array}{lll}
\edim(A\otimes_kB)_P &=&\edim(A)+\edim(B)+\htt\left(
{\dfrac{S^{-1}P/(A\otimes_kS^{-1}{\m})}{\n\otimes_k(S^{-1}B/S^{-1}{\m})}}\right)\\
&=&\edim(A)+\edim(B)+\htt\left(
{\dfrac{S^{-1}P}{\n\otimes_kS^{-1}B+A\otimes_kS^{-1}{\m}}}\right)\\
&=&\edim(A)+\edim(B)+\htt\left( {\dfrac
P{\n\otimes_kB+A\otimes_k{\m}}}\right)
\end{array}$$
completing the proof of the theorem.
\end{proof}

As a direct application of Theorem~\ref{r:1}, we obtain the next
corollary on the (embedding) codimension which recovers known
results on the transfer of regularity to tensor products over
perfect fields \cite[Theorem 6(c)]{TY} and, more generally, to
tensor products issued from residually separable extension fields
\cite[Theorem 2.11]{BK1}. Recall that a $k$-algebra $R$ is said to
be residually separable, if $\kappa_{R}(p)$ is separable over $k$
for each prime ideal $p$ of $R$.

%%%%%%%%%%%%%%%%%%%%%%%%%%%%%%%%%%%%%%%%%%%%%%%%%%%%%%%%%%%%%%%%%%
%%%%%%%%%%%%%%%%%%%%%%%%%%%%%%%%%%%%%%%%%%%%%%%%%%%%%%%%%%%%%%%%%%
\begin{corollary}\label{r:2}
Let $A$ and $B$ be two $k$-algebras such that $A\otimes_kB$ is
Noetherian and let $P$ be a prime ideal of $A\otimes_kB$ with
$p:=P\cap A$ and $q:=P\cap B$. Assume $\kappa_B(q)$ is separable
over $k$. Then:
$$\cdim(A\otimes_kB)_{P}=\cdim(A_p)+\cdim(B_q).$$
\end{corollary}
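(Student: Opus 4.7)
The plan is to simply subtract the extended special chain theorem for Krull dimension from the corresponding embedding-dimension formula of Theorem~\ref{r:1}, after observing that the common ``fibre'' correction term vanishes in the codimension.

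More precisely, set
$$C:=\Big(\kappa_A(p)\otimes_k\kappa_B(q)\Big)_{\frac{P(A_p\otimes_kB_q)}{pA_p\otimes_kB_{q}+A_p\otimes_kqB_{q}}}.$$
Since $\kappa_B(q)$ is separable over $k$, $\kappa_A(p)\otimes_k\kappa_B(q)$ is regular (this is exactly the fact invoked at the start of the proof of Theorem~\ref{r:1}, and is \cite[Lemma 6.7.4.1]{Gr}), hence so is its localization $C$. Therefore $\edim(C)=\dim(C)$, that is, $\cdim(C)=0$.

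By Theorem~\ref{r:1} we have
$$\edim(A\otimes_kB)_P\ =\ \edim(A_p)+\edim(B_q)+\edim(C),$$
and by the extended special chain theorem recalled in (\ref{r:eq1}) we have
$$\dim(A\otimes_kB)_P\ =\ \dim(A_p)+\dim(B_q)+\dim(C).$$
Subtracting the second equation from the first and using $\edim(C)=\dim(C)$, we obtain
$$\cdim(A\otimes_kB)_P\ =\ \cdim(A_p)+\cdim(B_q),$$
as required. There is no real obstacle here: the only nontrivial input is the regularity of the fibre ring $C$, which has already been established in the proof of Theorem~\ref{r:1}.
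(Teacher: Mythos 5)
Your proof is correct and is essentially the paper's own argument: the paper proves Corollary~\ref{r:2} by ``combining Theorem~\ref{r:1} and (\ref{r:eq1})'', which is exactly your subtraction, with the vanishing of $\cdim(C)$ already built into Theorem~\ref{r:1} via the regularity of $\kappa_A(p)\otimes_k\kappa_B(q)$ (one could even bypass $C$ entirely by subtracting the two formulas in their $\htt\left(\frac{P}{p\otimes_kB+A\otimes_kq}\right)$ forms). Nothing is missing.
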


%%%%%%%%%%%%%%%%%%%%%%%%%%%%%%%%
%%%%%%%%%%%%%%%%%%%%%%%%%%%%%%%%
\begin{proof}
Combine Theorem~\ref{r:1} and (\ref{r:eq1}).
\end{proof}

Note that if $k$ is perfect, then every $k$-algebra is residually
separable. Now, if $k$ is an arbitrary field, one can easily provide
original examples of residually separable $k$-algebras through
localizations of polynomial rings or pullbacks \cite{BG,BR}. For
instance, let $X$ be an indeterminate over $k$ and $K\subseteq L$
two separable extensions of $k$. Then, the one-dimensional local
$k$-algebras $R:=K+XL[X]_{(X)}\subseteq S:=L[X]_{(X)}$ are
residually separable since the extensions
$k\subseteq\kappa_{R}\left(XL[X]_{(X)}\right)=K\subseteq\kappa_{S}\left(XL[X]_{(X)}\right)=L\subset\kappa_R(0)=\kappa_S(0)=L(X)$
are separable over $k$ by Mac Lane's Criterion and transitivity of
separability. Also, similar arguments show that the two-dimensional
local $k$-algebra $R':=R+YL(X)[Y]_{(Y)}$  is residually separable,
where $Y$ is an indeterminate over $k$. Therefore, one may reiterate
the same process to build residually separable $k$-algebras of
arbitrary Krull dimension.

%%%%%%%%%%%%%%%%%%%%%%%%%%%%%%%%%%%%%%%%%%%%%%%%%%%%%%%%%%%%%%%%%%
%%%%%%%%%%%%%%%%%%%%%%%%%%%%%%%%%%%%%%%%%%%%%%%%%%%%%%%%%%%%%%%%%%
\begin{corollary}\label{r:3}
Let $A$ be a finitely generated $k$-algebra and $B$ a residually
separable $k$-algebra. Let $P$ be a prime ideal of $A\otimes_kB$
with $p:=P\cap A$ and $q:=P\cap B$. Then:
$$\cdim(A\otimes_kB)_{P}=\cdim(A_p)+\cdim(B_q).$$ In particular, $A\otimes_kB$ is regular if and only if so are $A$ and $B$.
\end{corollary}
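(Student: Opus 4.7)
The plan is to derive Corollary~\ref{r:3} as a direct consequence of Corollary~\ref{r:2}, combined with the faithful flatness of the canonical maps into a tensor product over a field. First, I would check that the hypotheses of Corollary~\ref{r:2} are met. Since $A$ is a finitely generated $k$-algebra, write $A\cong k[X_1,\ldots,X_n]/I$, so that $A\otimes_kB\cong B[X_1,\ldots,X_n]/IB[X_1,\ldots,X_n]$, which is Noetherian by Hilbert's basis theorem (under the standing assumption that $B$ is Noetherian, which is tacit once one requires $A\otimes_kB$ Noetherian). Residual separability of $B$ means precisely that $\kappa_B(q)$ is a separable extension of $k$ for \emph{every} prime $q$ of $B$; in particular for $q=P\cap B$. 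Hence Corollary~\ref{r:2} applies verbatim and yields $\cdim(A\otimes_kB)_P=\cdim(A_p)+\cdim(B_q)$.

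For the ``in particular'' clause, I would exploit the fact that $B$ is free over the field $k$, so that the canonical maps $A\longrightarrow A\otimes_kB$ and $B\longrightarrow A\otimes_kB$ are faithfully flat. Consequently the induced maps on prime spectra are surjective: every $p\in\Spec(A)$ and every $q\in\Spec(B)$ arises as the contraction of some prime of $A\otimes_kB$. Then $A\otimes_kB$ is regular iff $\cdim\bigl((A\otimes_kB)_P\bigr)=0$ for every $P\in\Spec(A\otimes_kB)$, iff (by the codimension formula just established) $\cdim(A_{P\cap A})=0$ and $\cdim(B_{P\cap B})=0$ for every such $P$, iff (by the surjectivity on spectra noted above) $A_p$ is regular for all $p\in\Spec(A)$ and $B_q$ is regular for all $q\in\Spec(B)$, which is exactly the assertion that $A$ and $B$ are regular.

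I do not anticipate any genuine obstacle: the substantive work has already been done in Theorem~\ref{r:1} and Corollary~\ref{r:2}. What remains is merely a bookkeeping step where faithful flatness is used to ensure that each prime of $A$ (respectively $B$) is realized as the contraction of some prime of $A\otimes_kB$, so that the local vanishing of codimension at every contracted prime upgrades to global regularity of the factors.
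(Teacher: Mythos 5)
Your proof is correct and matches the paper's (implicit) argument: Corollary~\ref{r:3} is presented there as an immediate consequence of Corollary~\ref{r:2}, which applies at every prime $P$ because residual separability of $B$ makes $\kappa_B(q)$ separable over $k$ for all $q$, and the faithful-flatness/lying-over bookkeeping you supply (together with the nonnegativity of the two codimension summands) is exactly the standard step needed for the ``in particular'' clause. Your remark that $B$ is tacitly Noetherian, so that $A\otimes_kB\cong B[X_1,\ldots,X_n]/IB[X_1,\ldots,X_n]$ is Noetherian, is likewise consistent with the paper's standing hypotheses.
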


%%%%%%%%%%%%%%%%%%%%%%%%%%%%%%%%%%%%%%%%%%%%%%%%%%%%%%%%%%%%%%%%%%
%%%%%%%%%%%%%%%%%%%%%%%%%%%%%%%%%%%%%%%%%%%%%%%%%%%%%%%%%%%%%%%%%%
\begin{corollary}\label{4:3}
Let $k$ be an algebraically closed field, $A$ a finitely generated
$k$-algebra, $p$ a maximal ideal of $A$, and $B$ an arbitrary
$k$-algebra. Let $P$ be a prime ideal of $A\otimes_kB$ such that
$P\cap A=p$ and set $q:=P\cap B$. Then:
$$\cdim(A\otimes_kB)_{P}=\cdim(A_p)+\cdim(B_q).$$
\end{corollary}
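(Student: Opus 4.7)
The strategy is to reduce this corollary to Corollary~\ref{r:2} by exploiting the hypothesis that $k$ is algebraically closed together with the Nullstellensatz.

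First I would observe that, since $A$ is a finitely generated $k$-algebra and $p$ is a maximal ideal of $A$, Zariski's lemma (the form of the Nullstellensatz valid for finitely generated algebras) implies that $A/p$ is a finite field extension of $k$. Because $k$ is algebraically closed, this forces $A/p = k$, and hence
$$\kappa_A(p)=A_p/pA_p = k.$$
In particular, $\kappa_A(p)$ is trivially a separable extension field of $k$.

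Next I would invoke the symmetry of the tensor product: the canonical isomorphism $A\otimes_k B \cong B\otimes_k A$ sends $P$ to a prime ideal $P'$ of $B\otimes_k A$ with $P'\cap B = q$ and $P'\cap A = p$, and induces an isomorphism $(A\otimes_k B)_P \cong (B\otimes_k A)_{P'}$ of local rings, so in particular $\cdim(A\otimes_k B)_P = \cdim(B\otimes_k A)_{P'}$. Applying Corollary~\ref{r:2} to $B\otimes_k A$ at $P'$, where the hypothesis ``$\kappa_A(p)$ separable over $k$'' plays the role of the separability assumption on the residue field of the second factor, yields
$$\cdim(B\otimes_k A)_{P'} = \cdim(B_q)+\cdim(A_p),$$
and combining these identities gives the desired formula.

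There is essentially no obstacle in this argument: the only content beyond a direct appeal to Corollary~\ref{r:2} is the verification that the algebraic closedness of $k$ together with finite generation of $A$ forces $\kappa_A(p)=k$, which is a standard consequence of the Nullstellensatz. One implicit point worth flagging is that Corollary~\ref{r:2} requires $A\otimes_k B$ to be Noetherian; since $A$ is a quotient of a polynomial ring $k[X_1,\dots,X_n]$, the ring $A\otimes_k B$ is a quotient of $B[X_1,\dots,X_n]$, so this reduces to (and is subsumed by) the Noetherian hypothesis on $B$ needed for $\cdim(B_q)$ to be defined.
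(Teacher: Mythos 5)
Your proof is correct and is exactly the argument the paper intends (the corollary is stated there without proof): Zariski's lemma plus the algebraic closedness of $k$ force $\kappa_A(p)=A/p=k$, trivially separable over $k$, and the flip isomorphism $A\otimes_kB\cong B\otimes_kA$ lets you apply Corollary~\ref{r:2} with the roles of the two factors exchanged. Your remark that Noetherianity of $A\otimes_kB$ (equivalently of $B$, since $A\otimes_kB$ is a quotient of $B[X_1,\dots,X_n]$) must be read into the statement for the codimensions to be defined is also well taken.
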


%%%%%%%%%%%%%%%%%%%%%%%%%%%%%%%%%%%%%%%%%%%%%%%%%%%%%%%%%%%%%%%%%%%%%%%%%%%%%%%%%%%%%%%%%%%%
%%%%%%%%%%%%%%%%%%%%%%%%%%%%%%%%%%%%%%%%%%%%%%%%%%%%%%%%%%%%%%%%%%%%%%%%%%%%%%%%%%%%%%%%%%%%
%%%%%%%%%%%%%%%%%%%%%%%%%%%%%%%%%%%%%%%%%%%%%%%%%%%%%%%%%%%%%%%%%%%%%%%%%%%%%%%%%%%%%%%%%%%%
%%%%%%%%%%%%%%%%%%%%%%%%%%%%%%%%%%%%%%%%%%%%%%%%%%%%%%%%%%%%%%%%%%%%%%%%%%%%%%%%%%%%%%%%%%%%

\end{document}